\documentclass[review]{elsarticle}

\usepackage{lineno,hyperref}

\usepackage{amsmath,amsthm,amsfonts,amssymb,amscd,amsxtra,mathrsfs,commath,url}
\usepackage{titlesec,booktabs}
\usepackage{epstopdf}
\usepackage{geometry,float,enumerate}
\usepackage{algorithm}%
\usepackage{algorithmicx}%
\usepackage{algpseudocode}%
\usepackage{makecell}
\usepackage{booktabs}
\usepackage{booktabs}
\usepackage{siunitx}
\usepackage{multirow}

\newsavebox\CBox

\titleformat*{\subsection}{\bfseries}

\journal{Journal of \LaTeX\ Templates}

\newtheorem{theorem}{Theorem}[section]

\newtheorem{lemma}[theorem]{Lemma}

\newtheorem{proposition}[theorem]{Proposition}

\newtheorem{assumption}[theorem]{Assumption}
\newtheorem{example}[theorem]{Example}
\newtheorem{remark}[theorem]{Remark}

\begin{document}
\begin{frontmatter}

\title{A Tight Surrogate-Based Adaptive Step Size  for the Multiobjective Conditional Gradient Method}

\author[mymainaddress]{Wang Chen}
\author[mysecondaddress]{Yong Zhao}
\author[mythirdaddress]{Liping Tang}
\author[mythirdaddress]{Xinmin Yang}

\cortext[mycorrespondingauthor]{ Email addresses: 
\href{mailto:chenwangff@163.com}{chenwangff@163.com} (Wang Chen),
\href{mailto:zhaoyongty@126.com}{zhaoyongty@126.com} (Yong Zhao),
\href{mailto:tanglipings@163.com}{tanglipings@163.com} (Liping Tang), \href{mailto:xmyang@cqnu.edu.cn}{xmyang@cqnu.edu.cn} (Xinmin Yang)
}

%
\address[mymainaddress]{School of Mathematical and Physical Sciences, Chongqing University of Science and Technology, Chongqing, 401331, China}
\address[mysecondaddress]{College of Mathematics and Statistics, Chongqing Jiaotong University, Chongqing 400074, China}
\address[mythirdaddress]{National Center for Applied Mathematics in Chongqing, Chongqing Normal University, Chongqing, 401331, China}

\begin{abstract}
	The multiobjective conditional gradient (MCondG) method is an effective descent-type algorithm for solving constrained multiobjective optimization problems. In this algorithm, the adaptive step size is derived by minimizing a one-dimensional convex quadratic surrogate function, which guarantees a sufficient decrease in all objective functions at each iteration.
	However, we observe that this surrogate function is relatively conservative, producing overly small step sizes and consequently slowing the convergence of the algorithm. This conservativeness stems from the fact that the surrogate function is dominated by the largest Lipschitz constant among the gradients of all objective functions. Motivated by this observation, we develop a tighter quadratic surrogate function that exploits the individual gradient Lipschitz constants. Based on this surrogate, we propose a new
	adaptive step size strategy obtained by evaluating this surrogate over a finite candidate set. We prove that
	the proposed step size preserves the descent property of the MCondG method, and has an improved worst-case complexity bound compared with the original adaptive step size strategy.
	Numerical experiments on benchmark and real-world problems demonstrate the effectiveness of the proposed strategy in terms of computational efficiency and solution quality.
\end{abstract}

\begin{keyword}
Multiobjective optimization, Conditional gradient method, Adaptive step size, Surrogate function, Convergence
\end{keyword}

\end{frontmatter}

\section{Introduction}

In this paper, we consider the constrained multiobjective optimization problem
\begin{equation}\label{mop}
	\begin{aligned}
		\min\quad &F(x)=\big(f_1(x),f_2(x),\cdots,f_m(x)\big)^{\top}\\
		\text{s.t.}\quad &x\in\Omega,
	\end{aligned}
\end{equation}
where $F:\mathbb{R}^n\rightarrow\mathbb{R}^m$ is continuously differentiable and $\Omega\subset\mathbb{R}^n$ is a nonempty compact convex set. A wide range of practical problems arising in engineering design, economics, management science, biomedicine and machine learning can be formulated within the framework of \eqref{mop}; see, for example, \cite{tanabe2020easy,kumar2021benchmark,zapotecas2023engineering}  and the references therein. 
Since the objective functions are generally conflicting, the solution concept is naturally characterized by Pareto optimality.

Several classes of methods have been developed for solving problem \eqref{mop}, including scalarization methods \cite{tang2021modified}, heuristic approaches \cite{chen2026decomposition}, and descent-based methods. Scalarization methods typically require preference information or appropriately selected scalarization parameters, which may depend on the user's preferences. Heuristic approaches, on the other hand, generally do not provide rigorous convergence guarantees.
Descent-based methods can overcome these limitations by extending classical descent algorithms for single-objective optimization to the multiobjective setting. Representative approaches include multiobjective projected gradient method \cite{DI2004}, multiobjective conditional gradient method
\cite{assunccao2021conditional}, multiobjective Zoutendijk method \cite{morovati2019extension}, multiobjective trust-region method, multiobjective reduced gradient methods \cite{moudden2018multiple} and multiobjective sequential quadratic programming method \cite{fliege2016method}. 

The multiobjective conditional gradient (MCondG) method, also known as multiobjective Frank--Wolfe method, has attracted considerable attention. Its appeal mainly stems from its simplicity and ease of implementation. 
In particular, MCondG method preserves feasibility by updating the iterate as a convex combination of the current iterate and a feasible solution, and avoids explicit projection computations by solving a linearized minimax subproblem over the constraint set at each iteration. The MCondG method was originally introduced by Assun{\c{c}}{\~a}o et al. in their seminal work \cite{assunccao2021conditional}, where its convergence properties were investigated under three different step size strategies, namely, the Armijo, adaptive and diminishing step sizes. Since then, a number of variants
and extensions of the MCondG framework have been developed in \cite{assunccao2024generalized,chen2023conditional,chen2024convergence,fan2025faster,gebrie2024adaptive,gonccalves2024away,gonccalves2025improved,li2023generalized,upadhayay2024nonmonotone,chen2025conditionalb}. For instance, Chen et al. \cite{chen2024convergence} investigated the
convergence behavior of MCondG with the adaptive step size when $\Omega$ is an unbounded closed convex set. Fan and Tang \cite{fan2025faster} established improved convergence rates for MCondG with the adaptive and diminishing step sizes under uniform convexity of the objective functions. Gon{\c{c}}alves
et al. \cite{gonccalves2024away} proposed an away-step variant of MCondG with
an adaptive step size for polyhedral constraint sets. More recently,
Gon{\c{c}}alves et al. \cite{gonccalves2025improved} established faster
sublinear and linear convergence rates for MCondG with the adaptive step size under different assumptions, including strong convexity of the objective
functions and uniform convexity of the constraint set. Upadhayay et al.
\cite{upadhayay2024nonmonotone} proposed a nonmonotone MCondG method in which
the step size is determined by a multiobjective average-type nonmonotone
line-search strategy introduced in \cite{mita2019nonmonotone}. Furthermore,
the MCondG framework has been extended to composite multiobjective
optimization problems \cite{assunccao2024generalized,gebrie2024adaptive,
	li2023generalized} and vector optimization problems
\cite{chen2023conditional}.

From the original MCondG method and its variants, the adaptive step size is particularly attractive because
it admits a closed-form expression while guaranteeing descent in all objective functions at every iteration (see \cite{assunccao2021conditional,assunccao2024generalized,chen2024convergence,fan2025faster,gonccalves2024away,gonccalves2025improved}). This strategy is based on the assumption that the gradient of each objective function $f_{i}$ is Lipschitz continuous with Lipschitz constant $L_{i}>0$, for $i\in\langle m\rangle=\{1,2,\cdots,m\}$; see Assumption \ref{lip_con}. By this assumption and Lemma 1 in \cite{assunccao2021conditional}, one has the following component-wise descent estimate:
\begin{equation}\label{des_ineq}
	\begin{aligned}
		f_{i}(x^{k}+td^{k})-f_{i}(x^{k})\leq \dfrac{1}{2}L_{\max}\|d(x^{k})\|^{2}t^{2}	+	 
		\theta(x^{k}) t,\quad t\in[0,1].
	\end{aligned}
\end{equation}
where $d^{k}$ denotes the search direction generated by the MCondG method, $L_{\max}=\max_{i\in\langle m\rangle}L_{i}$ and $\theta(x^{k})=\max_{i\in\langle m\rangle}\langle \nabla f_{i}(x^{k}),d^{k}\rangle$. The right-hand side of \eqref{des_ineq} defines a one-dimensional convex quadratic surrogate function. Minimizing this surrogate over $[0,1]$ gives the adaptive step size  (see \cite{assunccao2021conditional}):
\begin{equation}\label{stp_tk}
	t^{\rm AS}_{k}=\min\left\{1,\dfrac{-\theta(x^{k})}{L_{\max}\|d(x^{k})\|^{2}}\right\}.
\end{equation}

At first glance, the adaptive step size in \eqref{stp_tk} is appealing due to
its simple expression and low computational cost. Nevertheless, it is derived
from a worst-case quadratic surrogate that uses only the maximum Lipschitz
constant $L_{\max}$ to uniformly bound the curvature of all objective functions.
Such an aggregation may result in a loose upper bound when the individual
Lipschitz constants $L_1,L_2,\ldots,L_m$ vary significantly. Consequently, the
resulting surrogate can be overly conservative, leading to unnecessarily small
step sizes and, in turn, slowing down the convergence of the algorithm. The key limitation is not the use of a quadratic upper model itself, but the
additional relaxation that replaces the heterogeneous curvature information
$\{L_i\}_{i=1}^m$ by the single worst-case constant $L_{\max}$. Two illustrative examples are then provided to demonstrate the effect of this
relaxation and to quantify the conservativeness of the resulting step size.
This observation naturally leads to the following research question:

\begin{center}
	\emph{Can we develop a tighter one-dimensional convex surrogate  that more effectively exploits the individual Lipschitz constants $L_{1},L_{2},...,L_{m}$, rather than relying solely on $L_{\max}$, and thereby yields larger step sizes?}
\end{center}

In this paper, we provide an affirmative answer to this question by proposing a novel tight adaptive step size (TAS) strategy for MCondG methods. The key idea is to replace the conventional $L_{\max}$-based quadratic surrogate with a tighter piecewise-quadratic surrogate that explicitly incorporates the individual Lipschitz constants $L_1,L_{2}\ldots,L_m$. This new construction aims to reduce the conservativeness of the existing adaptive step size while preserving the descent property of MCondG. The main contributions of this paper are summarized as follows.

\begin{enumerate}[(i)]
	\item We identify a limitation of existing adaptive step size strategies based on $L_{\max}$: the use of a common worst-case curvature bound may lead to an overly loose convex quadratic surrogate and consequently to excessively small step sizes, thereby limiting the practical convergence efficiency of the MCondG method. We further illustrate this phenomenon through two numerical examples, providing a quantitative assessment of the conservativeness of the existing adaptive step size strategy.
		
	\item We develop a novel tight adaptive step size rule via direct minimization of a refined piecewise-quadratic surrogate function. Unlike the classical strategy based solely on $L_{\max}$, the new surrogate explicitly exploits the individual Lipschitz constants $L_1,L_{2}\ldots,L_m$. We further prove that this new step size can be computed exactly by evaluating the surrogate over a finite set of candidate points, resulting in an implementable procedure without requiring an additional line search.
	
	\item We establish the convergence properties of the MCondG method equipped wit	the proposed tight adaptive step size. In particular, we prove that the generated sequence of objective vectors is component-wise decreasing and that the algorithm converges to a Pareto stationary point. Moreover, we establish a tighter worst-case convergence bound compared with the corresponding result for the	adaptive step size in \cite{assunccao2021conditional}.

	\item We conduct numerical experiments on benchmark and real-world	multiobjective optimization problems. The numerical results demonstrate that	the proposed tight adaptive step size achieves better computational efficiency while maintaining comparable solution quality.
\end{enumerate}

The remainder of this paper is organized as follows. Section \ref{sec:2} recalls some preliminary results on multiobjective optimization and the MCondG method. Section \ref{new_stp} presents the proposed adaptive step size strategy. Section \ref{convergence} contains the convergence analysis of the algorithm.  Numerical experiments are reported in Section \ref{num_exp}. Finally, in Section \ref{conclusion}, some concluding remarks are given.

\section{Notations and Preliminaries}\label{sec:2}

Throughout this paper, $\langle\cdot,\cdot\rangle$ and $\|\cdot\|$ denote,
respectively, the standard inner product and the Euclidean norm in
$\mathbb{R}^n$. For a matrix $A\in\mathbb{R}^{n\times n}$, $\|A\|_2$ denotes its induced matrix 2-norm.
Let $e=(1,1,\ldots,1)^{\top}\in\mathbb{R}^{m}$. As usual, for $u,v\in\mathbb{R}^{m}$, we use ``$\preceq$'' to denote the partial order induced by the nonnegative orthant, i.e.,
\begin{equation*}
	u\preceq v ~\Leftrightarrow ~v-u\in\mathbb{R}_{+}^{m}.
\end{equation*}
 
In multiobjective optimization, the concept of optimality is replaced by that of Pareto optimality; see, e.g., \cite{miettinen1999nonlinear}. A point $\bar{x}\in\Omega$ is said to be Pareto optimal if there is no $x\in\Omega$ such that $F(x)\preceq F(\bar{x})$ and $F(x)\neq F(\bar{x})$. A point $\bar{x}\in\Omega$ is said to be weakly Pareto optimal if there is no $x\in\Omega$ such that $f_{i}(x)< f_{i}(\bar{x})$ for all $i\in\langle m\rangle$. A point $\bar{x}\in\Omega$ is called Pareto stationary (or Pareto critical) for \eqref{mop} if
\begin{equation*}
	\max_{i\in\langle m\rangle}~\langle \nabla f_{i}(\bar{x}),p-\bar{x}\rangle\geq0,\quad\forall p\in\Omega.
\end{equation*}
Under the convexity of the objective functions, every  Pareto critical point is a weakly Pareto optimal solution; see, e.g., \cite{miettinen1999nonlinear,assunccao2021conditional}. Since $\Omega$ is compact, its diameter $D=\max\{\|x-y\|:x,y\in\Omega\}$ is finite.

For a given point $x\in\Omega$, the search direction of the MCondG method is defined as 
\begin{equation}\label{des_direction}
	d(x)=p(x)-x,
\end{equation}
where
\begin{equation}\label{opt_sol}
	p(x)\in\mathop{\arg\min}_{p\in\Omega}\max_{i\in\langle m\rangle}~\langle \nabla f_{i}(x),p-x \rangle.
\end{equation}
Since $\Omega$ is compact, it follows that \eqref{opt_sol} has a solution, and thus $p(x)$ is well defined. The optimal value of \eqref{opt_sol} is denoted by $\theta(x)$, i.e.,
\begin{equation}\label{opt_val}
	\theta(x)=\max_{i\in\langle m\rangle}~\langle \nabla f_{i}(x),p(x)-x \rangle.
\end{equation}

In the following, we state several properties of the function $\theta(x)$; see \cite[Proposition 5]{assunccao2021conditional}.
\begin{lemma}\label{theta_property}
	The following statements hold:
	\begin{enumerate}[\rm(i)]
		\item $\theta(x)\leq0$ for all $x\in\Omega$\label{negative};
		\item $\theta(x)=0$ if and only if $x\in\Omega$ is a Pareto critical point of \eqref{mop};
		\item $\theta(x)$ is continuous.\label{theta_continuous}
	\end{enumerate}
	Moreover, if $\theta(x)<0$, then the direction $d(x)$ satisfies $\langle\nabla f_i(x),d(x)\rangle\leq\theta(x)<0$ for
	$ i\in\langle m\rangle$.
	Thus, $d(x)$ is a common descent direction for all objective functions at $x$.
	
\end{lemma}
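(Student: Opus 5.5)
The plan is to treat the four claims of Lemma \ref{theta_property} in order, working directly from the definitions \eqref{des_direction}--\eqref{opt_val}. Write $\phi(x,p)=\max_{i\in\langle m\rangle}\langle \nabla f_i(x),p-x\rangle$, so that $\theta(x)=\min_{p\in\Omega}\phi(x,p)$.

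For (i), take $p=x\in\Omega$ as a feasible choice. This gives $\phi(x,x)=0$, and hence $\theta(x)\le 0$.

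For (ii), suppose first that $\theta(x)=0$. Then $\phi(x,p)\ge\theta(x)=0$ for every $p\in\Omega$, which is exactly the definition of Pareto criticality. Conversely, if $x$ is Pareto critical, then $\phi(x,p)\ge 0$ for all $p\in\Omega$, so $\theta(x)\ge 0$; combined with (i) this yields $\theta(x)=0$.

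The final claim also comes straight from the definition. For each $i$,
\[
\langle \nabla f_i(x),d(x)\rangle\le \max_j\langle\nabla f_j(x),p(x)-x\rangle=\theta(x).
\]
So when $\theta(x)<0$, every directional derivative along $d(x)$ is negative, and $d(x)$ is a common descent direction.

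The main work is (iii), continuity of $\theta$. The idea is a uniform perturbation argument that uses compactness of $\Omega$ and continuity of the gradients. Fix $x,y\in\Omega$ and any $p\in\Omega$. For each $i$,
\[
\bigl|\langle\nabla f_i(x),p-x\rangle-\langle\nabla f_i(y),p-y\rangle\bigr|
\le \|\nabla f_i(x)-\nabla f_i(y)\|\,D+\|\nabla f_i(y)\|\,\|x-y\|.
\]
Since a maximum of finitely many functions is $1$-Lipschitz with respect to the sup of the componentwise differences, $|\phi(x,p)-\phi(y,p)|\le \varepsilon(x,y)$, where
\[
\varepsilon(x,y)=\max_i\bigl(D\|\nabla f_i(x)-\nabla f_i(y)\|+\|\nabla f_i(y)\|\,\|x-y\|\bigr).
\]
This bound holds uniformly in $p\in\Omega$.

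Taking the minimum over $p$ on both sides then gives $|\theta(x)-\theta(y)|\le\varepsilon(x,y)$. To close the argument, note that $\nabla f_i$ is continuous, hence bounded on the compact set $\Omega$ and uniformly continuous there. Therefore $\varepsilon(x,y)\to 0$ as $y\to x$, which proves continuity of $\theta$.

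The one point needing care is passing the estimate through the minimum. This is valid because $|\inf_p a(p)-\inf_p b(p)|\le\sup_p|a(p)-b(p)|$, and it is precisely why the bound must be uniform in $p$; here that uniformity is supplied by the finite diameter $D$.
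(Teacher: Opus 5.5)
Your proof is correct and complete. The paper gives no proof of this lemma; it cites \cite[Proposition 5]{assunccao2021conditional}, so your write-up supplies the argument the paper omits.

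Parts (i), (ii) and the descent claim follow immediately from the definitions, exactly as you show. Taking $p=x$ gives (i). The definition of Pareto criticality says precisely that $\max_i\langle\nabla f_i(x),p-x\rangle\ge 0$ for all $p\in\Omega$, which is the same as $\theta(x)\ge 0$.

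For (iii), your estimate is sound: the perturbation bound is uniform in $p$ because $\|p-x\|\le D$, and you pass it through the minimum using $|\inf_p a(p)-\inf_p b(p)|\le\sup_p|a(p)-b(p)|$. This route gives more than a compactness/Berge-type argument, which would minimize a jointly continuous function over a fixed compact set and extract a convergent subsequence of $p(x^k)$, yielding only qualitative continuity. Your route gives an explicit modulus. Under Assumption \ref{lip_con} it gives $|\theta(x)-\theta(y)|\le (DL_{\max}+G)\|x-y\|$ with $G=\max_{i}\max_{z\in\Omega}\|\nabla f_i(z)\|$, so $\theta$ is in fact Lipschitz on $\Omega$.

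A minor point: you do not need uniform continuity of $\nabla f_i$. Continuity at $x$ together with boundedness on $\Omega$ is enough for $\varepsilon(x,y)\to 0$ as $y\to x$.
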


In the following, we recall the MCondG method proposed in \cite{assunccao2021conditional}. 

\begin{algorithm}[H]
	\caption{MCondG algorithm}
	\label{alg:MCondG}
	\begin{algorithmic}[1]
		
		\Require Initial point $x^0\in\Omega$.
		\Ensure A Pareto stationary point.
		
		\State Compute $p(x^0)$ and $\theta(x^0)$.
		\State Set $k\gets0$.
		
		\While{$\theta(x^k)\neq0$}
		
		\State Compute the search direction
		\[
		d^k=p(x^k)-x^k.
		\]
		
		\State Compute a step size $t_k\in(0,1]$ using a prescribed step size strategy.
		
		\State Update
		\[
		x^{k+1}=x^k+t_kd^k.
		\]
		
		\State Compute $p(x^{k+1})$ and $\theta(x^{k+1})$.
		
		\State $k\gets k+1$.
		
		\EndWhile
		
	\end{algorithmic}
\end{algorithm}

For convenience, we use $d^{k}$ to denote $d(x^{k})$.
By Lemma \ref{theta_property}, the MCondG algorithm terminates whenever a Pareto critical point is reached. In the convergence analysis below, we consider the non-terminating case and assume that $\theta(x^{k}) < 0$ for all $k\geq0$. Hence, the algorithm generates an infinite sequence $\{x^k\}\subset\Omega$ (see \cite{assunccao2021conditional}), and  $d^{k}$ is a descent direction. We end this section by stating a key assumption that will be useful in our subsequent analysis of the step size.
 
 \begin{assumption}\label{lip_con}
 	For $i\in\langle m\rangle$, each objective function $f_{i}$ has a Lipschitz continuous gradient with Lipschitz constant $L_{i}>0$ , i.e.,
 	 $\|\nabla f_{i}(x)- \nabla f_{i}(y)\|\leq L_{i}\|x-y\|$ for all $x,y\in\Omega$.
 \end{assumption}
 
 \begin{remark}
 	Assumption \ref{lip_con} is a standard smoothness condition in multiobjective optimization and is satisfied by a broad class of commonly considered objective functions. In particular, consider the multiobjective convex quadratic problem studied, for example, in \cite{oberdieck2016objective,jahangiri2024solving}, where
 	\[
 	f_i(x)=\frac{1}{2}x^{\top}Q_i x+c_i^{\top}x+d_i,
 	\qquad i\in\langle m\rangle,
 	\]
 	where each $Q_i\in\mathbb{R}^{n\times n}$ is symmetric positive semidefinite, $c_{i}\in \mathbb{R}^{n}$ and $d_{i}\in\mathbb{R}$.
 	Then $\nabla f_i(x)=Q_i x+c_i$,
 	and hence $L_i=\|Q_i\|_2$ for $i\in\langle m\rangle$.
 \end{remark} 

\section{Derivation of the new step size}\label{new_stp}

In this section, we first revisit the derivation of the adaptive step size proposed in \cite{assunccao2021conditional} and illustrate its potential limitation through two examples. Motivated by these observations, we introduce a new adaptive step size obtained by directly minimizing a tighter quadratic model. 

\subsection{Limitations of the adaptive step size}

Using Assumption \ref{lip_con} and \cite[Lemma 5.7]{beck2017first}, we obtain the following component-wise descent inequality for each objective $i\in\langle m\rangle$:
\begin{equation*}
	f_{i}(y)\leq f_{i}(x)+\langle \nabla f_{i}(x),y-x\rangle+\dfrac{1}{2}L_{i}\|x-y\|^{2},\quad \forall x,y\in\Omega.
\end{equation*}
Let $y=x^{k}+td^{k}$ with $t\in[0,1]$ and $x=x^{k}$. Combining the above inequality with the relation \eqref{opt_val} and $L_{\max}=\max_{i\in\langle m\rangle} L_{i}$, for all  $i\in\langle m\rangle$, we derive the following chain of relaxations:
\begin{equation}\label{descent-lemma}
	\begin{aligned}
		f_{i}(x^{k}+td^{k})-f_{i}(x^{k})&\leq \dfrac{1}{2}L_{i}\|d^{k}\|^{2}t^{2}+\langle\nabla f_{i}(x^{k}),d^{k}\rangle t\\
		&\leq \max_{i\in\langle m\rangle}\left\{\dfrac{1}{2}L_{i}\|d^{k}\|^{2}t^{2}+\langle\nabla f_{i}(x^{k}),d^{k}\rangle t\right\}\\
		&\leq \max_{i\in\langle m\rangle} \left\{\dfrac{1}{2}L_{i}\|d^{k}\|^{2}t^{2}\right\}	+	 
		\max_{i\in\langle m\rangle}\{\langle \nabla F_{i}(x^{k}),d^{k}\rangle t\}\\
		&=\dfrac{1}{2} L_{\max}\|d^{k}\|^{2}t^{2}	+	 
		\theta(x^{k}) t,\quad \forall t\in[0,1].
	\end{aligned}
\end{equation}

To facilitate the subsequent analysis, we define three auxiliary functions:
\begin{align}
	g_{i,k}(t)&= \dfrac{1}{2}L_{i}\|d^{k}\|^{2}t^{2}+\langle\nabla f_{i}(x^{k}),d^{k}\rangle t,\quad i\in\langle m\rangle,\label{gki}\\
	g_{k}(t)&=\max_{i\in\langle m\rangle}g_{i,k}(t),\label{gk}\\
	h_{k}(t)&=\dfrac{1}{2} L_{\max}\|d^{k}\|^{2}t^{2}	+\theta(x^{k}) t.\label{hk}
\end{align}
Clearly, by \eqref{descent-lemma}, for each $i\in\langle m\rangle$, it holds that
\begin{align}\label{gki_gk_hk}
	g_{i,k}(t)\leq g_{k}(t)\leq h_{k}(t),\quad \forall t\in[0,1].
\end{align}
Observe that $h_{k}(t)$ depends on the largest Lipschitz constant $L_{\max}$ of the gradients over all objective functions, and moreover,
it serves as a loose upper bound for $F(x^{k}+td^{k}) - F(x^{k})$. The adaptive step size \eqref{stp_tk} proposed in \cite{assunccao2021conditional} is defined as the minimizer of $h_{k}(t)$ over $[0,1]$, i.e.,
\begin{equation*}
	t^{\rm AS}_{k}=\min\left\{1,\dfrac{-\theta(x^{k})}{L_{\max}\|d^{k}\|^{2}}\right\}=\underset{t\in[0,1]}{\arg\min}~ h_{k}(t).
\end{equation*}
We split the closed-form minimizer into two cases:
\begin{itemize}
	\item When $t^{\rm AS}_{k}=1$, we have
	\begin{align*}
		h_{k}(t_{k}^{\rm AS})=\dfrac{1}{2} L_{\max}\|d^{k}\|^{2}	+\theta(x^{k})\leq \dfrac{1}{2}\theta(x^{k}).
	\end{align*}
	\item When $t^{\rm AS}_{k}=-\theta(x^{k})/(L_{\max}\|d^{k}\|^{2})<1$, we get
	\begin{align*}
		h_{k}(t_{k}^{\rm AS})= -\frac{\theta(x^{k})^{2}}{2L_{\max} \|d^{k}\|^{2}}.
	\end{align*}
\end{itemize}
Hence, we arrive at an important  descent inequality that measures the improvement of the functional values sequence $\{F(x^{k})\}$ generated by the MCondG algorithm with the adaptive step size \eqref{stp_tk}, which was first established in Proposition 13 of \cite{assunccao2021conditional}. For completeness, we restate this property as follows:
\begin{proposition}\label{tas_des}
	Consider the MCondG algorithm with the adaptive step size \eqref{stp_tk}. Then,
	\begin{align*}\label{eq:monotone0}
		F(x^{k}+t_{k}^{\rm AS}d^{k})- F(x^{k})
		\preceq-\frac{1}{2}\min\left\{-\theta(x^{k}),\frac{\theta(x^{k})^{2}}{L_{\max} D^{2}}\right\}e.
	\end{align*}
\end{proposition}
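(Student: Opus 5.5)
The plan is to combine the componentwise chain of relaxations \eqref{descent-lemma} with the two-case evaluation of $h_k(t_k^{\rm AS})$ carried out just before the statement. Then I replace $\|d^k\|$ by the diameter $D$. Throughout I assume $\theta(x^k)<0$, as stated after Algorithm~\ref{alg:MCondG}.

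First, since $t_k^{\rm AS}\in[0,1]$, inequality \eqref{gki_gk_hk} (equivalently \eqref{descent-lemma}) gives
\begin{equation*}
f_i(x^k+t_k^{\rm AS}d^k)-f_i(x^k)\le h_k(t_k^{\rm AS})
\end{equation*}
for every $i\in\langle m\rangle$. In vector form this reads $F(x^k+t_k^{\rm AS}d^k)-F(x^k)\preceq h_k(t_k^{\rm AS})e$. It therefore suffices to show the scalar bound
\begin{equation*}
h_k(t_k^{\rm AS})\le -\tfrac12\min\{-\theta(x^k),\,\theta(x^k)^2/(L_{\max}D^2)\}.
\end{equation*}

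Second, I split into the two cases already recorded.

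\emph{Case $t_k^{\rm AS}=1$.} This case means $-\theta(x^k)\ge L_{\max}\|d^k\|^2$. Hence $h_k(1)=\tfrac12L_{\max}\|d^k\|^2+\theta(x^k)\le \tfrac12\theta(x^k)$. This is $-\tfrac12(-\theta(x^k))$, which is at most $-\tfrac12$ times the minimum.

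\emph{Case $t_k^{\rm AS}<1$.} Here $h_k(t_k^{\rm AS})=-\theta(x^k)^2/(2L_{\max}\|d^k\|^2)$. Since $x^k,p(x^k)\in\Omega$, we have $\|d^k\|=\|p(x^k)-x^k\|\le D$. Also $d^k\neq0$, because $\theta(x^k)<0$ forces $d^k\ne 0$. It follows that $-\theta(x^k)^2/(2L_{\max}\|d^k\|^2)\le -\theta(x^k)^2/(2L_{\max}D^2)$. This is again at most $-\tfrac12$ times the minimum.

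Combining the two cases gives the claim. There is no real obstacle here. The only points needing care are that $D>0$ and $\|d^k\|>0$ in the second case, so that the divisions are legitimate, and that both cases are dominated by the single $\min$ expression. The latter holds because $-\tfrac12 a\le -\tfrac12\min\{a,b\}$ for any $a,b$.
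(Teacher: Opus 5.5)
Your proposal is correct and follows essentially the same route as the paper: bound each component by $h_k(t_k^{\rm AS})$ via \eqref{descent-lemma}, evaluate $h_k(t_k^{\rm AS})$ separately for $t_k^{\rm AS}=1$ and $t_k^{\rm AS}<1$, and then use $\|d^k\|\le D$. Your checks that $d^k\neq 0$ when $\theta(x^k)<0$ and that $-\tfrac12 a\le -\tfrac12\min\{a,b\}$ only make explicit details the paper leaves implicit.
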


Although the step size $t^{\rm AS}_{k}$ admits a closed-form expression and guarantees monotonic descent of the vector-valued objective sequence $\{F(x^{k})\}$, its formulation relies heavily on the maximum Lipschitz constant $L_{\max}$. If we focus solely on the first inequality in Lemma \ref{descent-lemma} and minimize its right-hand side quadratic surrogate over $[0,1]$, then we obtain an adaptive step size for each objective function, given by
\begin{equation}\label{stp_asi}
	t_{i,k}^{\rm AS} =  \min\left\{1, -\frac{\langle\nabla f_i(x^k),d^k\rangle}{L_i\|d^k\|^2}\right\}=\underset{t\in[0,1]}{\arg\min} \left\{\dfrac{1}{2}L_{i}\|d^{k}\|^{2}t^{2}+\langle\nabla f_{i}(x^{k}),d^{k}\rangle t\right\}, \quad i\in\langle m\rangle.
\end{equation}
These quantities $t_{i,k}^{\rm AS}$ with $i\in\langle m\rangle$ represent the ``ideal'' step size for each objective when optimized in isolation. However, none of the individual step sizes $t_{i,k}^{\rm AS}$ can  guarantee $F(x^{k}+t_{i,k}^{\rm AS}d^{k}) \preceq F(x^{k})$. Since each $t_{i,k}^{\rm AS}$ is computed using the objective-specific Lipschitz constant $L_{i}$, it is no smaller than $t_{k}^{\rm AS}$.

\begin{proposition}
	For all $k\geq0$, there holds
	\begin{equation*}
		t_{k}^{\rm AS}\leq t_{i,k}^{\rm AS},\quad\forall i\in\langle m\rangle.
	\end{equation*}
\end{proposition}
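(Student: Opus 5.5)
The comparison reduces to two elementary inequalities, one for each argument of the minimum in \eqref{stp_tk} and \eqref{stp_asi}. Fix $k\geq0$ and $i\in\langle m\rangle$. We are in the non-terminating case, so $\theta(x^k)<0$. By Lemma \ref{theta_property}, $\langle\nabla f_i(x^k),d^k\rangle\leq\theta(x^k)<0$. In particular $d^k\neq0$, so both quotients are well defined and positive. Negating the first inequality gives
\begin{equation*}
-\langle\nabla f_i(x^k),d^k\rangle\geq-\theta(x^k)>0 .
\end{equation*}
From the definition $L_{\max}=\max_{j\in\langle m\rangle}L_j$ we also have $0<L_i\leq L_{\max}$, and hence $L_i\|d^k\|^2\leq L_{\max}\|d^k\|^2$.

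Combining these, the quotient in \eqref{stp_asi} dominates the one in \eqref{stp_tk}:
\begin{equation*}
\frac{-\langle\nabla f_i(x^k),d^k\rangle}{L_i\|d^k\|^2}\;\geq\;\frac{-\theta(x^k)}{L_i\|d^k\|^2}\;\geq\;\frac{-\theta(x^k)}{L_{\max}\|d^k\|^2}.
\end{equation*}
The first inequality enlarges the numerator over the same positive denominator. The second shrinks the positive numerator's denominator from $L_{\max}\|d^k\|^2$ down to $L_i\|d^k\|^2$, which can only increase the quotient.

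The map $s\mapsto\min\{1,s\}$ is nondecreasing. Applying it to the displayed inequality yields $t_k^{\rm AS}\leq t_{i,k}^{\rm AS}$, and since $i$ was arbitrary the bound holds for every $i\in\langle m\rangle$.

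The only point needing care is the sign information. One must confirm that $\langle\nabla f_i(x^k),d^k\rangle<0$, so that all quotients are positive and the inequalities do not flip direction, and that $d^k\neq0$. Both facts follow from $\theta(x^k)<0$ together with Lemma \ref{theta_property}.
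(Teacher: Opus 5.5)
Your proof is correct and follows the paper's own argument. The paper likewise combines $L_i\le L_{\max}$ with $\langle\nabla f_i(x^k),d^k\rangle\le\theta(x^k)<0$ to compare the two quotients and then appeals to the definitions of the step sizes. You additionally make explicit two points the paper leaves implicit: that $d^k\neq0$, and that $s\mapsto\min\{1,s\}$ is nondecreasing.
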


\begin{proof}
	From $L_{i}\leq L_{\max}$ and $\langle\nabla f_i(x^k),d^k\rangle\leq\theta(x^{k})<0$ for all $i\in\langle m\rangle$, it follows that
	\begin{equation} 
		-\dfrac{\theta(x^{k})}{L_{\max}\|d^{k}\|^{2}}\leq-\frac{\langle\nabla f_i(x^k),d^k\rangle}{L_i\|d^k\|^2},\quad \forall i\in\langle m\rangle.
	\end{equation}
	Together with the definitions of $t_{k}^{\rm AS}$ in \eqref{stp_tk} and $t_{i,k}^{\rm AS}$ in \eqref{stp_asi}, this inequality implies $t_{k}^{\rm AS}\leq t_{i,k}^{\rm AS}$ for all $i\in\langle m\rangle$, as desired.
\end{proof}

The above proposition indicates that the adaptive step size $t_k^{\rm AS}$ is always bounded by the objective-wise step sizes $t_{i,k}^{\rm AS}$ due to the use of the maximum Lipschitz constant $L_{\max}$. This observation suggests that the adaptive step size in \cite{assunccao2021conditional} may be overly conservative, especially when the Lipschitz constants of different objective functions differ significantly. To illustrate this issue more clearly, we provide two examples in which the quadratic upper bound $h_k(t)$ and the individual quadratic models $g_{i,k}(t)$ are compared. The results demonstrate the gap between the conservative step size $t_k^{\rm AS}$ and the objective-wise ideal step sizes $t_{i,k}^{\rm AS}$.

\begin{example}\label{ex1}
	Consider the standard DGO2 test problem proposed in \cite{huband2006review}. We construct its modified variant, denoted as DGO2*, with $m=2$, $n=1$ and $\Omega=[-5,5]$, whose objective functions are defined as
	\begin{equation*}
		f_{1}(x)=(x-2)^{2},\qquad
		f_{2}(x)=9-\sqrt{81-x^{2}}.
	\end{equation*}
	Clearly, $L_{1}=2$ and $L_{2}=81/56^{1.5}$. Take $x^{0}=-4.9$. Then, by solving the associated problem \eqref{opt_sol}, we obtain  $\theta(x^{0})=-6.4259$ and $p(x^{0})= 5$. Thus, by \eqref{stp_tk}, we derive $t_{0}^{\rm AS}= 0.0327$, which is indicated  by  the red point in Figure \ref{fig:ex3.1}. In contrast, the individual minimizers are $t_{1,0}= 0.697$ and $t_{2,0}= 0.3392$, which are marked by the blue and black points, respectively.
	\begin{figure}[H]
		\centering
		\includegraphics[width=0.5\linewidth]{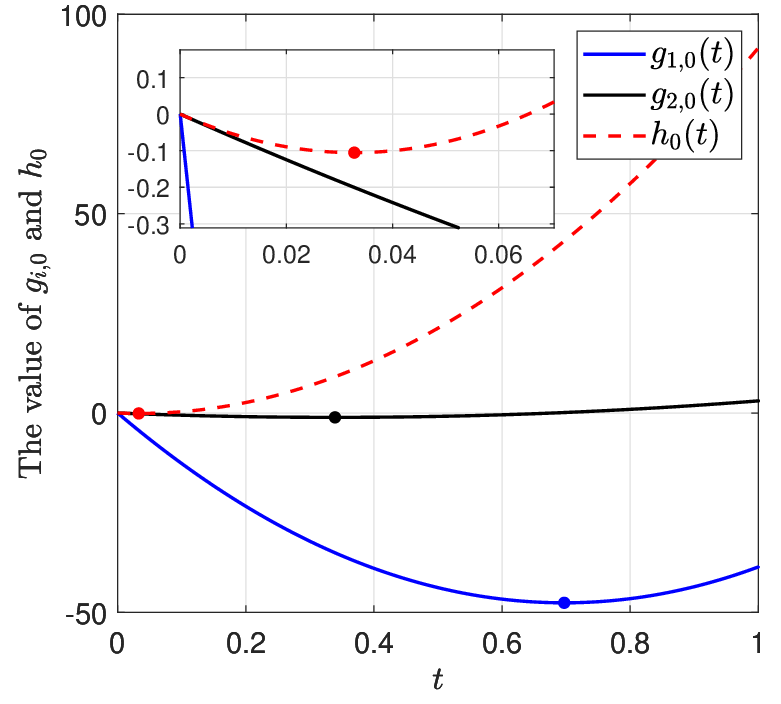}
		\caption{Graphs of $g_{1,0}(t)$, $g_{2,0}(t)$ and $h_{0}(t)$.}
		\label{fig:ex3.1}
	\end{figure}
\end{example}

\begin{example}\label{ex2}
	Consider the problem \eqref{mop} with $m=3$, $n=3$, $\Omega=[-1,1]^{3}$,
		\begin{equation*}
		f_{1}(x)=(2x_{1}-1)^{2}, \qquad
		f_{2}(x)=2(2x_{1}-x_{2})^{2}, \qquad
		f_{3}(x)=3(2x_{2}-x_{3})^{2}.
		\end{equation*}
	This instance is called Toi8, which can be found in \cite{mita2019nonmonotone,toint1983test}. For this problem, we have $L_{1}=8$, $L_{2}=20$ and $L_{3}=30$. Take $x^{0}=(-0.7, 0.6, -0.9)^{\top}$. Then, by solving the associated problem \eqref{opt_sol}, we get  $\theta(x^{0}) = -16.32$ and $p(x^{0})=(1,-1,1)^{\top}$. Thus, from \eqref{stp_tk}, we derive $t_{0}^{\rm AS} = 0.06$, which is marked by the red point in Figure \ref{fig:ex3.2}. The individual minimizers are $t_{1,0}= 0.2252$, $t_{2,0}= 0.2208$ and $t_{3,0}= 0.2364$, which are indicated by the blue, black and green points, respectively.
	\begin{figure}[H]
		\centering
		\includegraphics[width=0.5\linewidth]{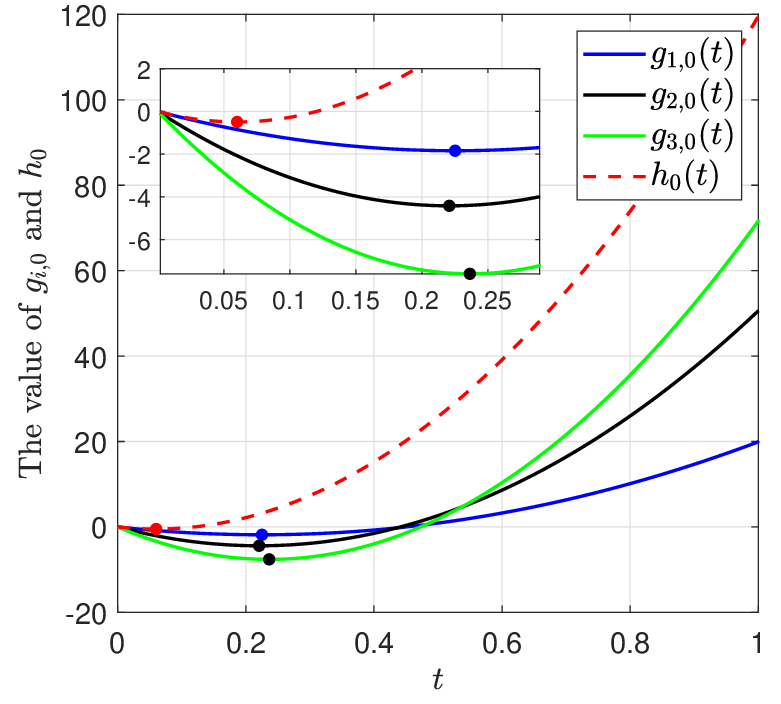}
		\caption{Graphs of $g_{1,0}(t)$, $g_{2,0}(t)$, $g_{3,0}(t)$ and $h_{0}(t)$.}
		\label{fig:ex3.2}
	\end{figure}
	
\end{example}

We conclude this section by characterizing the relationship between $t_k^{\rm TAS}$ and $t_k^{\rm AS}$ based on the relative values of the Lipschitz constants.

\begin{itemize}
	\item If $L_i=L_{\max}$ for all $i\in\langle m\rangle$, then $g_k(t)= h_k(t)$, and hence $t_k^{\rm TAS}=t_k^{\rm AS}$.
	
	\item If there exist $i,j\in\langle m\rangle$ such that $L_i\neq L_j$, then $g_k(t)\leq h_k(t)$ with strict inequality for some $t$, and $t_k^{\rm TAS}$ may be strictly larger than $t_k^{\rm AS}$.
\end{itemize}

\subsection{Construction of the Tight Adaptive Stepsize}

The preceding analysis and numerical examples show that the adaptive
step size $t_k^{\rm AS}$ may be overly conservative because it is derived
from the relaxed upper model $h_k$, which employs the largest Lipschitz
constant $L_{\max}$. In contrast, the
function $g_k$ retains the objective-wise Lipschitz constants
$L_i$ and therefore provides a tighter upper model. This motivates us
to determine the step size directly by minimizing $g_k$ over $[0,1]$, i.e.,
\begin{equation}\label{tight_ada_pro}
	\min_{t\in[0,1]}\; g_k(t).
\end{equation}

The following proposition establishes the basic properties of the
surrogate function $g_k$ and guarantees the uniqueness of the proposed
step size.

\begin{proposition}
	Let $g_{k}$ be as in \eqref{gk}. Then $g_{k}$ is strictly convex on $[0,1]$. Consequently, the problem \eqref{tight_ada_pro} admits a unique optimal solution.
\end{proposition}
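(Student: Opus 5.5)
Each $g_{i,k}$ defined in \eqref{gki} is a one-dimensional quadratic in $t$ whose leading coefficient is $\frac{1}{2}L_i\|d^k\|^2$. My plan is to show this coefficient is strictly positive, so that every $g_{i,k}$ is strictly convex. Strict convexity then passes to the pointwise maximum $g_k$, and uniqueness follows from compactness of $[0,1]$.

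\textbf{Step 1: each $g_{i,k}$ is strictly convex.} Assumption \ref{lip_con} gives $L_i>0$. We work in the non-terminating case, so $\theta(x^k)<0$. If $d^k=0$, then by \eqref{opt_val} we would have $\theta(x^k)=\max_i\langle\nabla f_i(x^k),0\rangle=0$, a contradiction. Hence $d^k\neq 0$ and $\|d^k\|^2>0$. Therefore $g_{i,k}''(t)=L_i\|d^k\|^2>0$, and each $g_{i,k}$ is strictly convex on $\mathbb{R}$, in particular on $[0,1]$.

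\textbf{Step 2: the maximum is strictly convex.} Fix $t_1\neq t_2$ in $[0,1]$ and $\lambda\in(0,1)$, and set $t_\lambda=\lambda t_1+(1-\lambda)t_2$. Choose an index $j$ attaining the maximum in \eqref{gk} at $t_\lambda$. Then
\[
g_k(t_\lambda)=g_{j,k}(t_\lambda)<\lambda g_{j,k}(t_1)+(1-\lambda)g_{j,k}(t_2)\leq \lambda g_k(t_1)+(1-\lambda)g_k(t_2).
\]
This is the key point: because the index $j$ is chosen at the interior point $t_\lambda$, the strict inequality for $g_{j,k}$ carries over to $g_k$. Taking a maximum of strictly convex functions would otherwise only guarantee convexity.

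\textbf{Step 3: existence and uniqueness.} Each $g_{i,k}$ is continuous and the index set $\langle m\rangle$ is finite, so $g_k$ is continuous. Since $[0,1]$ is compact, \eqref{tight_ada_pro} has a minimizer. Suppose $t_1\neq t_2$ were two minimizers with common value $g^*$. Strict convexity would give $g_k\bigl(\frac{t_1+t_2}{2}\bigr)<g^*$, contradicting minimality. So the minimizer is unique.

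The only step needing care is Step 2. The argument hinges on selecting the active index at the convex combination rather than at the endpoints, and on having $d^k\neq0$, which I obtain from $\theta(x^k)<0$.
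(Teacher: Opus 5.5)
Your proof is correct. The paper omits this proof as trivial, and yours is the standard argument it implicitly relies on. That includes the point the paper leaves tacit: strict convexity requires $d^k\neq 0$, which you correctly derive from the non-terminating assumption $\theta(x^k)<0$.
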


\begin{proof}
	The proof is trivial, so we omit it for brevity.
\end{proof}

Denote by $t^{\rm TAS}_{k}$ the optimal solution of \eqref{tight_ada_pro} given by
\begin{equation}\label{exact_ada_pro1}
	t^{\rm TAS}_{k}=\underset{t\in[0,1]}{\arg\min}~ g_{k}(t)	
\end{equation}

The following observation clarifies the relationship between the
proposed tight adaptive stepsize and the classical adaptive stepsize.

\begin{remark}\label{rem:TAS_AS}
	If all objective functions have the same Lipschitz constant, i.e.,
	$L_i=L_{\max}$ for all $i\in\langle m\rangle$, then
	\[
	g_k(t)
	=
	\frac{1}{2}L_{\max}\|d^k\|^2t^2
	+
	\max_{i\in\langle m\rangle}
	\langle\nabla f_i(x^k),d^k\rangle t
	=
	h_k(t),
	\]
	for all $t\in[0,1]$. Consequently,
	\[
	t_k^{\rm TAS}=t_k^{\rm AS}.
	\]
	When the Lipschitz constants are not identical, the inequality
	$g_k(t)\leq h_k(t)$ may be strict for some $t\in[0,1]$. In this case,
	the proposed step size may differ from the classical adaptive step size,
	and can be strictly larger in suitable situations.
\end{remark}

To obtain the step size $t^{\rm TAS}_{k}$, we can adopt general univariate convex solvers, for example the built-in routine \texttt{fminbnd} in MATLAB. However, such solvers only yield approximate solutions and, more importantly, the resulting approximate step sizes are incompatible with the theoretical convergence analysis of our algorithm. Interestingly, the special structure of $g_k$ allows its exact minimizer to be obtained by evaluating finitely many candidate points. For this purpose, define 
\begin{align*}
	M_{k}&=\left\{-\frac{\langle\nabla f_{i}(x^{k}),d^{k}\rangle}{L_{i}\|d^{k}\|^{2}}:i\in\langle m\rangle\right\}\\
	I_{k}&=\left\{-\frac{2\langle\nabla f_{i}(x^{k})-\nabla f_{j}(x^{k}),d^{k}\rangle}{(L_{i}-L_{j})\|d^{k}\|^{2}}:i,j\in\langle m\rangle,\;i<j,\;L_{i}\neq L_{j}\right\}
\end{align*}
and the candidate set
\begin{align*}
	\mathcal{C}_{k}=\{1\}\cup M_{k} \cup I_{k}.
\end{align*}
The set $M_{k}$ includes the minimizers of the
individual quadratic functions $g_{i,k}$, while $I_{k}$
contains the pairwise intersection points of quadratic functions with
different quadratic coefficients. The next proposition shows that these
points are sufficient to determine the minimizer of $g_k$.	

\begin{proposition}\label{prop:3.4}
	The unique minimizer $t_k^{\rm TAS}$ of $g_k$ over $[0,1]$ satisfies
	\[
	t_k^{\rm TAS}\in\mathcal{C}_k\cap[0,1].
	\]
\end{proposition}

\begin{proof}
	First, we show $t_k^{\rm TAS}>0$. Since $g_k(0)=0$ and
	\begin{align*}
		\lim_{t\to0^+}\frac{g_k(t)-g_k(0)}{t}
		&= \lim_{t\to0^+}\max_{i\in\langle m\rangle}\left\{\frac{1}{2}L_i\|d^k\|^2t+\langle\nabla f_{i}(x^{k}),d^{k}\rangle \right\}\\
		&= \max_{i\in\langle m\rangle}\lim_{t\to0^+}\left\{\frac{1}{2}L_i\|d^k\|^2t+\langle\nabla f_{i}(x^{k}),d^{k}\rangle \right\}\\
		&= \max_{i\in\langle m\rangle}~ \langle\nabla f_{i}(x^{k}),d^{k}\rangle \\
		&= \theta(x^k)<0,
	\end{align*}
	there exists $\varepsilon>0$ such that $g_k(t)<g_k(0)$ for all $t\in(0,\varepsilon)$. Hence, $t_k^{\rm TAS}>0$.
	
	If $t_k^{\rm TAS}=1$, then clearly
	$t_k^{\rm TAS}\in\mathcal{C}_k$. It remains to consider the case $t_k^{\rm TAS}\in(0,1)$. Define the active index set
	\[
	\mathcal{A}_k
	=
	\left\{
	i\in\langle m\rangle:
	g_{i,k}(t_k^{\rm TAS})=g_k(t_k^{\rm TAS})
	\right\}.
	\]
	
	We next distinguish two cases:
	
	\emph{Case 1.} $|\mathcal{A}_k|=1$. Let $\mathcal{A}_k=\{i^*\}$. Then $t_k^{\rm TAS}$ is the unique minimizer of $g_{i^*,k}$, and hence
	\begin{align*} 
		t_{k}^{\rm TAS}=-\frac{\langle\nabla f_{i^{*}}(x^{k}),d^{k}\rangle}{L_{i^{*}}\|d^{k}\|^{2}}\in\mathcal{C}_{k}.
	\end{align*}
	
	\emph{Case 2.} $|\mathcal{A}_k|\geq 2$. Take $i^{*},j^{*}\in\mathcal{A}_k$ with $i^*\neq j^*$. Then, 
	\begin{align*}
		g_{i^{*},k}(t^{\rm TAS}_{k})=g_{j^{*},k}(t^{\rm TAS}_{k})=g_{k}(t^{\rm TAS}_{k}),
	\end{align*}
	which yields
	\begin{align*}
		\dfrac{1}{2}L_{i^{*}}\|d^{k}\|^{2}(t^{\rm TAS}_{k})^{2}+\langle\nabla f_{i^{*}}(x^{k}),d^{k}\rangle t^{\rm TAS}_{k}=\dfrac{1}{2}L_{j^{*}}\|d^{k}\|^{2}(t^{\rm TAS}_{k})^{2}+\langle\nabla f_{j^{*}}(x^{k}),d^{k}\rangle t^{\rm TAS}_{k}.
	\end{align*}
	Since $t^{\rm TAS}_{k}>0$, the above equation implies that
	\begin{align*}
		\dfrac{1}{2}(L_{i^{*}}-L_{j^{*}})\|d^{k}\|^{2}t^{\rm TAS}_{k}+\langle\nabla f_{i^{*}}(x^{k})-\nabla f_{j^{*}}(x^{k}),d^{k}\rangle=0.
	\end{align*}
	Therefore,
	\begin{align*}
		t^{\rm TAS}_{k}=-\frac{2\langle\nabla f_{i^{*}}(x^{k})-\nabla f_{j^{*}}(x^{k}),d^{k}\rangle}{(L_{i^{*}}-L_{j^{*}})\|d^{k}\|^{2}}\in\mathcal{C}_{k}.
	\end{align*}
 	Combining the three cases completes the proof.
\end{proof}

\begin{remark}\label{rem:complexity}
	The candidate set $\mathcal{C}_k$ is finite. In particular, it contains
	at most $m$ individual minimizers and at most $\binom{m}{2}$ pairwise intersection points, in addition to the endpoint $t=1$. Hence, $|\mathcal{C}_k| \le 1 + m + \binom{m}{2}=
	O(m^2)$,
	where $m$ is the number of objective functions. Therefore, $t_k^{\rm TAS}$ can be computed exactly by evaluating $g_k(t)$ over the finite set $\mathcal{C}_k\cap[0,1]$ and selecting the point with the smallest function value. Although this process incurs a slightly higher computational cost at each iteration, this additional burden is usually negligible, as demonstrated by the numerical experiments in Section~\ref{num_exp}.
\end{remark}

Proposition \ref{prop:3.4} shows that the optimal solution of
\eqref{tight_ada_pro} can be obtained by evaluating $g_k(t)$ over the
finite candidate set $\mathcal{C}_k\cap[0,1]$. Therefore, the tight
adaptive step size can be computed without solving the one-dimensional
optimization problem \eqref{tight_ada_pro} by a generic numerical
optimization solver. Based on this observation, we summarize the
procedure for computing $t_k^{\rm TAS}$ in Algorithm \ref{alg:tas}.

\begin{algorithm}[H]
	\caption{Computation of the tight adaptive step size $t_k^{\rm TAS}$}
	\label{alg:tas}
	\begin{algorithmic}[1]
		\State \textbf{Input}: $\nabla f_{i}(x^{k})$ and $L_{i}$ for $i\in\langle m\rangle$, $d^{k}$
		\State \textbf{Output}: $t_k^{\rm TAS}$
		
		\State Initialize $\mathcal{C}_k\gets\{1\}$
		
		\For{$i=1$ \textbf{to} $m$}
		\State
		$t_i^*
		\gets
		-\dfrac{\langle\nabla f_i(x^k),d^k\rangle}
		{L_i\|d^k\|^2}$
		\If{$0<t_i^*<1$}
		\State
		$\mathcal{C}_k
		\gets
		\mathcal{C}_k\cup\{t_i^*\}$
		\EndIf
		\EndFor
		
		\For{$i=1$ \textbf{to} $m-1$}
		\For{$j=i+1$ \textbf{to} $m$}
		\If{$L_i\neq L_j$}
		\State
		$t_{ij}^*
		\gets
		-\dfrac{
			2\langle\nabla f_i(x^k)-\nabla f_j(x^k),d^k\rangle}
		{(L_i-L_j)\|d^k\|^2}$
		\If{$0<t_{ij}^*<1$}
		\State
		$\mathcal{C}_k
		\gets
		\mathcal{C}_k\cup\{t_{ij}^*\}$
		\EndIf
		\EndIf
		\EndFor
		\EndFor
		
		\State
		$t_k^{\rm TAS}
		\gets
		\underset{t\in\mathcal{C}_k}{\arg\min}\; g_k(t)$
		
	\end{algorithmic}
\end{algorithm}

\section{Convergence Analysis}\label{convergence}

In this section, we establish the convergence properties of the M-CondG
algorithm equipped with the proposed tight adaptive step size
$t_k^{\rm TAS}$. The analysis is based on the tight quadratic surrogate
$g_k$ introduced in \eqref{gk}. We first establish a descent estimate for the value of $g_k$ at the proposed step size.

\begin{proposition}\label{le:le0}
	Let $g_{k}$ be defined by \eqref{gk}. Consider the M-CondG algorithm with the tight adaptive step size. Then, there exists a constant $\widehat{L}\in(0,L_{\max}]$  such that
	\begin{equation}\label{gk_bounds}
		g_{k}(t_{k}^{\rm TAS})\leq-\frac{1}{2}\min\left\{-\theta(x^{k}),\frac{\theta(x^{k})^{2}}{ \widehat{L}D^2}\right\}.
	\end{equation}
\end{proposition}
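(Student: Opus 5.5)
The plan is to compare $g_k$ at its minimizer with $h_k$, and then sharpen the curvature constant. The trivial route already gives the bound with $\widehat L=L_{\max}$. Since $t_k^{\rm TAS}$ minimizes $g_k$ over $[0,1]$ and $g_k\le h_k$ by \eqref{gki_gk_hk},
\[
g_k(t_k^{\rm TAS})\le g_k(t_k^{\rm AS})\le h_k(t_k^{\rm AS}).
\]
The case analysis preceding Proposition \ref{tas_des} gives $h_k(t_k^{\rm AS})\le -\frac12\min\{-\theta(x^k),\theta(x^k)^2/(L_{\max}\|d^k\|^2)\}$. Using $\|d^k\|\le D$, since $x^k,p(x^k)\in\Omega$, this yields \eqref{gk_bounds} with $\widehat L=L_{\max}$. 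So existence of some $\widehat L\in(0,L_{\max}]$ is immediate.

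To make the statement meaningful, I would define $\widehat L$ as a smaller effective curvature. Define $\widehat L_k$ as the smallest $L>0$ for which the quadratic $q_L(t)=\frac12 L\|d^k\|^2t^2+\theta(x^k)t$ dominates $g_k$ on the relevant interval. Equivalently,
\[
\widehat L_k=\max_{i}\,\sup_{t\in(0,1]}\Big(L_i+\frac{2(\langle\nabla f_i(x^k),d^k\rangle-\theta(x^k))}{\|d^k\|^2 t}\Big)_+ .
\]
Because $\langle\nabla f_i(x^k),d^k\rangle\le\theta(x^k)$, each term is at most $L_i\le L_{\max}$, so $\widehat L_k\le L_{\max}$. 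With $g_k\le q_{\widehat L_k}$ on $[0,1]$, I repeat the two-case minimization of $q_{\widehat L_k}$. If $-\theta(x^k)\ge\widehat L_k\|d^k\|^2$, take $t=1$ and get value $\le\frac12\theta(x^k)$. Otherwise take $t=-\theta(x^k)/(\widehat L_k\|d^k\|^2)$ and get value $-\theta(x^k)^2/(2\widehat L_k\|d^k\|^2)\le-\theta(x^k)^2/(2\widehat L_kD^2)$. Minimality of $t_k^{\rm TAS}$ then transfers the bound to $g_k(t_k^{\rm TAS})$.

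The main obstacle is that the proposition states a single constant independent of $k$. I would set $\widehat L=\sup_k\widehat L_k$, which is at most $L_{\max}$, and use that the bound is monotone in $\widehat L$.

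Positivity is the delicate part, since $\widehat L_k$ could degenerate to $0$. To guard against this, I would instead take $\widehat L=\max\{\sup_k\widehat L_k,\,L_{\min}\}$ with $L_{\min}=\min_iL_i>0$. This still lies in $(0,L_{\max}]$, and the bound remains valid because enlarging $\widehat L$ only weakens the right-hand side. If no sharper constant is obtainable, the fallback $\widehat L=L_{\max}$ from the first paragraph proves the statement as written.
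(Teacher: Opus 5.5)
Your proof is correct, but it takes a different route from the paper. The paper splits into cases according to where the minimizer of $g_k$ lies: at $t=1$, at the minimizer of a single active $g_{i^*,k}$, or at a crossing of two active quadratics. Each case yields its own constant ($L_{i^*}$, or a harmonic interpolation $L_k$ between $L_{i^\blacktriangle}$ and $L_{\max}$), and the paper then takes a maximum/supremum of these. You never use the structure of the minimizer. You use only its minimality against a single quadratic majorant, either $h_k$ or the tightest majorant $q_L$ with linear coefficient $\theta(x^k)$.

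Your first paragraph is already a complete proof of the statement as written. The paper's crossing case in fact uses exactly your chain $g_k(t_k^{\rm TAS})\le g_k(t_k^{\rm AS})\le h_k(t_k^{\rm AS})$. Your route also avoids two delicate points of the case analysis. When $t_k^{\rm TAS}=1$, the inequality $L_{i^\diamond}\|d^k\|^2\le-\langle\nabla f_{i^\diamond}(x^k),d^k\rangle$ holds only for a suitably chosen active index (one with $g_{i^\diamond,k}'(1)\le0$), not an arbitrary one. And $L_{i^*}$ depends on $k$, so it also needs a supremum.

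Your refined constant has a closed form. Since $\langle\nabla f_i(x^k),d^k\rangle-\theta(x^k)\le0$, the supremum over $t\in(0,1]$ is attained at $t=1$, so
\[
\widehat L_k=\max_{i\in\langle m\rangle}\Big\{L_i+\frac{2\big(\langle\nabla f_i(x^k),d^k\rangle-\theta(x^k)\big)}{\|d^k\|^2}\Big\}.
\]
This makes your positivity safeguard unnecessary: an index attaining the maximum in $\theta(x^k)$ contributes exactly its own $L_i$, so $\widehat L_k\ge\min_iL_i>0$ automatically. The formula also shows that $\widehat L_k=L_{\max}$ whenever an index with $L_i=L_{\max}$ attains $\theta(x^k)$. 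The paper's case-wise constants can be smaller than yours at some iterations (e.g.\ $L_{i^*}$ in its Case 2). After the supremum over $k$, however, neither argument certifies $\widehat L<L_{\max}$; both establish exactly the existence claim.
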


\begin{proof}
	We consider the three possible forms of the optimal step size
	$t_k^{\rm TAS}$.
	
	\emph{Case 1.}
	Suppose that
	$t_k^{\rm TAS}=1$. Since $g_k(1)=\max_{i\in\langle m\rangle}g_{i,k}(1)$,
	there exists an index $i^\diamond\in\langle m\rangle$ such that
	\begin{equation}\label{tk1_1}
		g_k(t_k^{\rm TAS})
		=
		g_{i^\diamond,k}(1)
		=
		\frac{1}{2}L_{i^\diamond}\|d^k\|^2
		+
		\langle\nabla f_{i^\diamond}(x^k),d^k\rangle.
	\end{equation}
	Moreover, since $t_k^{\rm TAS}=1$ is the minimizer of the corresponding
	quadratic model on $[0,1]$, we have
	\begin{equation*}\label{tk1_2}
		-\frac{\langle\nabla f_{i^\diamond}(x^k),d^k\rangle}
		{L_{i^\diamond}\|d^k\|^2}
		\geq 1.
	\end{equation*}
	Hence, $L_{i^\diamond}\|d^k\|^2
	\leq-\langle\nabla f_{i^\diamond}(x^k),d^k\rangle$.
	Combining this inequality with \eqref{opt_val} and \eqref{tk1_1}, we obtain
	\begin{align}
		g_k(t_k^{\rm TAS})\leq
		\frac{1}{2}
		\langle\nabla f_{i^\diamond}(x^k),d^k\rangle 
		\leq
		\frac{1}{2}\theta(x^k).
		\label{tk1_3}
	\end{align}
	
	\emph{Case 2.}
	Suppose that $t_k^{\rm TAS}\in(0,1)$ coincides with the minimizer of
	one of the individual quadratic functions, say $g_{i^*,k}$, and that
	this function is active at $t_k^{\rm TAS}$. Then
	\[
	t_k^{\rm TAS}
	=
	-\frac{\langle\nabla f_{i^*}(x^k),d^k\rangle}
	{L_{i^*}\|d^k\|^2},
	\]
	and therefore
	\begin{equation*}\label{gi*tas}
		\begin{aligned}
			g_k(t_k^{\rm TAS})
			=
			g_{i^*,k}(t_k^{\rm TAS})
			=
			-\frac{
				\langle\nabla f_{i^*}(x^k),d^k\rangle^2
			}{
				2L_{i^*}\|d^k\|^2
			}.
		\end{aligned}
	\end{equation*}
	Since $\langle\nabla f_{i^*}(x^k),d^k\rangle
	\leq \theta(x^k)<0$
	and $\|d^k\|\leq D$, it follows that
	\begin{align}
		g_k(t_k^{\rm TAS})
		\leq
		-\frac{\theta(x^k)^2}
		{2L_{i^*}\|d^k\|^2}
		\leq
		-\frac{\theta(x^k)^2}
		{2L_{i^*}D^2}.
		\label{case2bound}
	\end{align}

	\emph{Case 3.}
	Suppose that $t_k^{\rm TAS}\in(0,1)$ is an intersection point of
	two or more active quadratic models. Let
	$i^\blacktriangle$ be any active index at $t_k^{\rm TAS}$. Then
	\begin{equation}\label{gi_star_tas}
		\begin{aligned}
			g_k(t_k^{\rm TAS})
			=
			g_{i^\blacktriangle,k}(t_k^{\rm TAS})
			=
			\frac{1}{2}L_{i^\blacktriangle}\|d^k\|^2
			(t_k^{\rm TAS})^2
			+
			\langle\nabla f_{i^\blacktriangle}(x^k),d^k\rangle
			t_k^{\rm TAS}.
		\end{aligned}
	\end{equation}
	The minimum value of $g_{i^\blacktriangle,k}(t)$ over $t\geq0$ is
	\[
	y^\blacktriangle
	=
	-\frac{
		\langle\nabla f_{i^\blacktriangle}(x^k),d^k\rangle^2
	}{
		2L_{i^\blacktriangle}\|d^k\|^2
	}.
	\]
	Since $y^\blacktriangle$ is the minimum value of
	$g_{i^\blacktriangle,k}$ and $t_k^{\rm TAS}$ is feasible, we have
	\[
	y^\blacktriangle
	\leq
	g_k(t_k^{\rm TAS}).
	\]
	On the other hand, by \eqref{gki_gk_hk} and the definition of
	$t_k^{\rm TAS}$,
	\[
	g_k(t_k^{\rm TAS})
	\leq
	g_k(t_k^{\rm AS})
	\leq
	h_k(t_k^{\rm AS})
	=
	-\frac{\theta(x^k)^2}
	{2L_{\max}\|d^k\|^2}.
	\]
	
	Consequently,
	\[
	-\frac{
		\theta(x^k)^2
	}{
		2L_{\max}\|d^k\|^2
	}
	\geq
	g_k(t_k^{\rm TAS})
	\geq
	-\frac{
		\langle\nabla f_{i^\blacktriangle}(x^k),d^k\rangle^2
	}{
		2L_{i^\blacktriangle}\|d^k\|^2
	}.
	\]
	Moreover, $
	|\langle\nabla f_{i^\blacktriangle}(x^k),d^k\rangle|
	\geq
	|\theta(x^k)|.
	$
	Hence, there exists a number
	$\omega_k\in[0,1]$ such that
	\begin{align}
		g_k(t_k^{\rm TAS})
		&=
		\omega_k
		\left(
		-\frac{\theta(x^k)^2}
		{2L_{\max}\|d^k\|^2}
		\right)
		+
		(1-\omega_k)
		\left(
		-\frac{
			\langle\nabla f_{i^\blacktriangle}(x^k),d^k\rangle^2
		}{
			2L_{i^\blacktriangle}\|d^k\|^2
		}
		\right)
		\notag\\
		&\leq
		-\frac{\theta(x^k)^2}
		{2\|d^k\|^2}
		\left(
		\frac{\omega_k}{L_{\max}}
		+
		\frac{1-\omega_k}{L_{i^\blacktriangle}}
		\right).
		\label{case3bound}
	\end{align}
	Define
	\begin{equation*}\label{Lhatk}
		\frac{1}{ L_k}
		=
		\frac{\omega_k}{L_{\max}}
		+
		\frac{1-\omega_k}{L_{i^\blacktriangle}}.
	\end{equation*}
	Since
	$L_{i^\blacktriangle}\leq L_{\max}$, we have $
	0< L_k\leq L_{\max}$.
	Let $\widetilde{L}=\sup\{ L_k:k\geq 0\}$. Then, we have $\widetilde L\in (0, L_{\max}]$. Using $\|d^k\|\leq D$ in \eqref{case3bound}, we obtain
	\[
	g_k(t_k^{\rm TAS})
	\leq
	-\frac{\theta(x^k)^2}
	{2 L_{k} D^2}\leq
	-\frac{\theta(x^k)^2}
	{2\widetilde L D^2}.
	\]
	
	Finally, let $\widehat{L}=\max\{\widetilde{L},L_{i^*}\}$. 
	Combining this with the above three cases gives
	\begin{equation*}
		g_{k}(t_{k}^{\rm TAS})\leq\frac{1}{2}\max\left\{\theta(x^{k}),-\frac{\theta(x^{k})^{2}}{ \widehat{L}D^2}\right\},
	\end{equation*}
	which completes the proof.
\end{proof}

In the following, we establish a basic result that characterizes the improvement of the functional values sequence $\{F(x^{k})\}$ generated by the M-CondG algorithm with the tight adaptive step size. This
implies that this sequence is componentwise decreasing.

\begin{proposition}\label{le:le1}
	Consider the M-CondG algorithm with the tight adaptive step size. Then there exists  $\widehat{L}\in(0,L_{\max}]$  such that
	\begin{equation}\label{eq:des1}
		F(x^{k}+t_{k}^{\rm TAS}d^{k})- F(x^{k})\preceq -\frac{1}{2}\min\left\{-\theta(x^{k}),\frac{\theta(x^{k})^{2}}{ \widehat{L}D^2}\right\}e.
	\end{equation}
\end{proposition}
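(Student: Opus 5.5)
The plan is to combine the componentwise descent inequality \eqref{descent-lemma} with Proposition \ref{le:le0}, taking the same constant $\widehat{L}\in(0,L_{\max}]$ produced there. No new estimate is needed. Everything will be evaluated at $t=t_k^{\rm TAS}$, which lies in $(0,1]$ by Proposition \ref{prop:3.4} and its proof.

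First, fix $i\in\langle m\rangle$. Take $y=x^k+t_k^{\rm TAS}d^k$ in the first line of \eqref{descent-lemma}. This point is in $\Omega$ because it is a convex combination of $x^k$ and $p(x^k)$ and $\Omega$ is convex. We get
\[
f_i(x^k+t_k^{\rm TAS}d^k)-f_i(x^k)\leq g_{i,k}(t_k^{\rm TAS}).
\]
By \eqref{gki_gk_hk}, the right-hand side is at most $g_k(t_k^{\rm TAS})$.

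Second, invoke \eqref{gk_bounds} to bound $g_k(t_k^{\rm TAS})$ by
\[
-\frac{1}{2}\min\left\{-\theta(x^k),\frac{\theta(x^k)^2}{\widehat{L}D^2}\right\}.
\]
This bound does not depend on $i$. Since the chain holds for every $i\in\langle m\rangle$, we may write it in vector form with the all-ones vector $e$ and the order $\preceq$, which gives \eqref{eq:des1}.

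The only point needing care is that $\widehat{L}$ must be the same constant for all $k$, as Proposition \ref{le:le0} asserts. Its construction takes a supremum over $k$ of the case-3 constants $L_k$, together with a maximum with the case-2 constant, and we simply inherit it. If one wanted more rigor, one could replace $\widehat{L}$ by $L_{\max}$ in any case. This is valid because the bound is monotone in $\widehat{L}$: a larger $\widehat{L}$ gives a weaker bound, so the stated inequality with some $\widehat{L}\le L_{\max}$ implies the one with $L_{\max}$. Finally, note that the right-hand side of \eqref{eq:des1} is strictly negative because $\theta(x^k)<0$, so $\{F(x^k)\}$ is componentwise strictly decreasing.
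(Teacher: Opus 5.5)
Your proposal is correct and follows the paper's own proof, which simply states that the result follows from \eqref{descent-lemma} and \eqref{gk_bounds}. You spell out what that one line leaves implicit: the chain $f_i(x^k+t_k^{\rm TAS}d^k)-f_i(x^k)\le g_{i,k}(t_k^{\rm TAS})\le g_k(t_k^{\rm TAS})$, the feasibility of the new iterate, and the fact that the $k$-independent constant $\widehat{L}$ is inherited from Proposition \ref{le:le0}.

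One small note: your fallback to $\widehat{L}=L_{\max}$ is deduced from the $\widehat{L}$-inequality itself, so as written it adds no rigor. An independent justification is $g_k(t_k^{\rm TAS})\le g_k(t_k^{\rm AS})\le h_k(t_k^{\rm AS})$, combined with the two-case computation preceding Proposition \ref{tas_des}.
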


\begin{proof}
	The result obviously follows from \eqref{descent-lemma} and \eqref{gk_bounds}.
\end{proof}

\begin{remark}
	Compared with Proposition \ref{tas_des}, the above proposition replaces $L_{\max}$ by a constant $\widehat{L}\in(0,L_{\max}]$.
	Consequently, the descent estimate obtained by $t_{k}^{\rm TAS}$ is never worse than that of $t_{k}^{\rm AS}$.
\end{remark}

As an application of Proposition \eqref{le:le1}, we establish the following convergence property of the MCondG algorithm with the adaptive step size strategy, without assuming convexity of the objective functions. Define 
\begin{equation*}
	f_{i}^{\inf}=\inf\{f_{i}(x):x\in\Omega\},\quad i\in\langle m\rangle.
\end{equation*}

\begin{theorem}\label{thm1}
	Consider the M-CondG algorithm with the tight adaptative step size. Let $i_{*}$ be an index such that $f_{i_{*}}(x^{0})-f_{i_{*}}^{\inf}=\min\{f_{i}(x^{0})-f_{i}^{\inf}:i\in\langle m\rangle\}$. Then there exists $\widehat{L}\in(0,L_{\max}]$ such that
	\begin{enumerate}[\rm(i)]
		\item $\lim_{k\rightarrow\infty}\theta(x^{k})=0$;
		\item for every $N\in\mathbb{N}$, there holds
		\begin{equation}\label{rate}
			\begin{aligned}
				\min&\{|\theta(x^k)|: k=0,1,\ldots,N-1\}\\
				&\leq
				\max\left\{
				\frac{2(f_{i_*}(x^0)-f_{i_*}^{\inf})}{N},
				\;
				D
				\sqrt{
					\frac{2\widehat{L}(f_{i_*}(x^0)-f_{i_*}^{\inf})}{N}
				}
				\right\}.
			\end{aligned}
		\end{equation}
	\end{enumerate}
\end{theorem}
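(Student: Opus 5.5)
The plan is to telescope the per-iteration descent estimate of Proposition \ref{le:le1} along the objective $f_{i_*}$, exactly as in the analysis of the classical adaptive step size in \cite{assunccao2021conditional}. Fix the constant $\widehat{L}\in(0,L_{\max}]$ given by Proposition \ref{le:le1}; it is independent of $k$, since it is built from the supremum $\widetilde L$ over all iterations. Write $\psi(s)=\min\{s, s^{2}/(\widehat{L}D^{2})\}$ for $s\ge 0$. The $i_*$-th component of \eqref{eq:des1} then reads
\[
f_{i_*}(x^{k+1})\le f_{i_*}(x^{k})-\tfrac12\psi(|\theta(x^{k})|),\qquad k\ge0,
\]
where I use $\theta(x^k)<0$, so that $-\theta(x^k)=|\theta(x^k)|$.

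Summing from $k=0$ to $N-1$ and using $f_{i_*}(x^{N})\ge f_{i_*}^{\inf}$ gives
\[
\tfrac12\sum_{k=0}^{N-1}\psi(|\theta(x^k)|)\le f_{i_*}(x^0)-f_{i_*}^{\inf}.
\]
The right-hand side is finite because $f_{i_*}$ is continuous on the compact set $\Omega$.

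\textbf{Part (i).} Letting $N\to\infty$ shows that the series $\sum_k\psi(|\theta(x^k)|)$ converges. Hence $\psi(|\theta(x^k)|)\to0$. Since $\psi$ is continuous, nondecreasing, and vanishes only at $0$, this forces $\theta(x^k)\to0$. To see this directly: if $\psi(s_k)\to 0$, then eventually either $s_k$ or $s_k^2/(\widehat L D^2)$ is small, and both alternatives force $s_k$ small.

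\textbf{Part (ii).} Let $\theta_N=\min\{|\theta(x^k)|:k=0,\ldots,N-1\}$. Because $\psi$ is nondecreasing, $N\psi(\theta_N)\le\sum_{k=0}^{N-1}\psi(|\theta(x^k)|)\le 2(f_{i_*}(x^0)-f_{i_*}^{\inf})$. I then split according to which term attains the minimum in $\psi(\theta_N)$.
\begin{itemize}
\item If $\psi(\theta_N)=\theta_N$, then $\theta_N\le 2(f_{i_*}(x^0)-f_{i_*}^{\inf})/N$.
\item If $\psi(\theta_N)=\theta_N^2/(\widehat L D^2)$, then $\theta_N\le D\sqrt{2\widehat L(f_{i_*}(x^0)-f_{i_*}^{\inf})/N}$.
\end{itemize}
In either case $\theta_N$ is at most the maximum of the two bounds, which is \eqref{rate}.

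\textbf{Main subtlety.} The only real concern is that $\widehat{L}$ must be a single constant valid for every $k$, so that the telescoped inequality holds with a fixed $\psi$. I will rely on Proposition \ref{le:le1} as stated, which provides this uniform $\widehat L$. The choice of $i_*$ is not essential: any index works, and $i_*$ is chosen simply to give the smallest bound.
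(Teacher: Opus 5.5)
Your proposal is correct and takes the same route as the paper, which omits the proof by pointing to Corollary 14 of \cite{assunccao2021conditional}: telescope the $i_*$-th component of the descent estimate in Proposition \ref{le:le1}, then split according to which term attains the minimum. Your concern that $\widehat L$ must not depend on $k$ is well placed, since the paper's choice $\max\{\widetilde L, L_{i^*}\}$ has $i^*$ varying with $k$. It is nonetheless harmless: taking the maximum of these constants over all iterations gives a uniform choice, and so does $\widehat L=L_{\max}$, via $g_k(t_k^{\rm TAS})\le g_k(t_k^{\rm AS})\le h_k(t_k^{\rm AS})$.
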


\begin{proof}
	The proof is analogous to that of Corollary 14 in \cite{assunccao2021conditional}, and is therefore omitted.
\end{proof}

\begin{remark}
	{\rm (i)}
	A direct consequence of Theorem \ref{thm1}(i) and Lemma \ref{theta_property}(ii)--(iii) is that every limit point $\bar{x}$ of the sequence $\{x^k\}$ generated by the MCondG algorithm with the tight adaptive step size strategy is a Pareto critical point. Furthermore, by Proposition \ref{le:le1},
	\[
	f_i(\bar{x})=\inf\{f_i(x^k):k=0,1,\ldots\},\qquad \forall i\in\langle m\rangle.
	\]
	If, in addition, $F$ is convex on $\Omega$, then $\bar{x}$ is a weak Pareto optimal solution of problem \eqref{mop}.
	
	{\rm (ii)} Since $\widehat{L}\in(0,L_{\max}]$, the bound in \eqref{rate} is never worse than that of Corollary 14(ii) in \cite{assunccao2021conditional}. Moreover, if $\widehat{L}<L_{\max}$, the proposed adaptive step size provides a strictly tighter upper bound on
	\[
	\min_{0\le k\le N-1}|\theta(x^k)|,
	\]
	indicating an improved worst-case convergence guarantee.
\end{remark}

\section{Numerical experiments}\label{num_exp}

In this section, we present some numerical experiments to verify and demonstrate the performance of the proposed tight adaptive step size in comparison to the adaptive step size in \cite{assunccao2021conditional}. For convenience, we abbreviate the names of the two algorithms under comparison as follows:
\begin{itemize}
	\item \textbf{MCondG-AS}: the MCondG algorithm with $t_{k}=t_{k}^{\rm AS}$;
	\item \textbf{MCondG-TAS}: the MCondG algorithm with the tight adaptive step size obtained in Algorithm 1.
\end{itemize}

All codes are written in MATLAB R2020b and run on a PC with the 2.90 GHz Intel i7-10700 processor and 32 GB of RAM. For the subproblem \eqref{opt_sol}, we consider the equivalent formulation presented as follows:
\begin{equation}\label{linear_problem}
	\begin{aligned}
		\text{min}\quad &\tau\\
		\text{s.t.}\quad &\langle \nabla f_{i}(x),p-x \rangle\leq\tau,\quad i\in\langle m\rangle,\\
		& \tau\in\mathbb{R}, p\in\Omega.
	\end{aligned}
\end{equation}
In our experiments, we consider DGO2*, Toi8 and MGH33, which are listed in Example \ref{ex1}, \ref{ex2} and \ref{ex3}, respectively.
For each algorithm, we test each problem using 100 different initial points that are uniformly selected from the constrained set. The standard MATLAB subroutine \texttt{linprog} with default options is adopted to solve the problem \eqref{linear_problem}. The stopping criterion for the two algorithms is defined as $|\theta(x^{k})|\leq\epsilon$, where $\epsilon=10^{-5}$. The maximum number of allowed outer iterations is set to 1000, after which the algorithm is considered to have failed.

We compare MCondG-TAS and MCondG-AS using the following performance measurements: the average step size (Stp), the average number of iterations (Iter), and the average CPU time (in seconds) required to satisfy the stopping criterion. We additionally employ the \emph{Purity} and ($\Gamma$ and $\Delta$) \emph{Spread} metrics \cite{custodio2011direct} to quantitatively evaluate the quality and distribution of the obtained solution sets.

\subsection{Performance on test problems}

In this section, we compare MCondG-TAS with MCondG-AS on DGO2* and Toi8. The numerical results are reported in Table \ref{tab:numer_compare_dgo2-toi8}. For both problems, MCondG-TAS selects larger step sizes than MCondG-AS. The ratios of the mean step sizes are approximately 14.75 for DGO2* and 2.24 for Toi8, respectively. The larger step sizes also lead to fewer iterations and lower CPU times. For DGO2*, the average number of iterations decreases from 95.79 for MCondG-AS to 6.65 for MCondG-TAS, while the CPU time decreases from 0.67s to 0.06s. For Toi8, the corresponding numbers decrease from 146.89 to 92.69 iterations and from 1.01s to 0.66s, respectively. The reduction is particularly evident for DGO2*, whereas the difference between the two methods is smaller for Toi8.

The quality of the final solutions is comparable for the two methods, with some improvements obtained by MCondG-TAS. On DGO2*, the Purity value decreases from 2.0370 to 1.9643, and the $\Delta$-Spread decreases from 1.6859 to 1.4162. Both methods give the same $\Gamma$-Spread value of 0.7588. On Toi8, the two methods have the same Purity value of 2.0000. MCondG-TAS gives a slightly smaller $\Delta$-Spread, 1.3198 compared with 1.3602, and a smaller $\Gamma$-Spread, 2.1295 compared with 3.2995. Overall, the results indicate that MCondG-TAS can use larger step sizes without deteriorating the quality of the final solutions, while reducing the number of iterations and the CPU time.

\begin{table}[H]\footnotesize
	\centering
		\setlength{\tabcolsep}{4pt} 
	\caption{Numerical results obtained by MCondG-AS and MCondG-TAS on DGO2* and Toi8.}
	\footnotesize
	\resizebox{\linewidth}{!}{
	\begin{tabular}{c cccccc cccccc}
		\toprule
		& \multicolumn{6}{c}{MCondG-AS} & \multicolumn{6}{c}{MCondG-TAS} \\
		\cmidrule(lr){2-7} \cmidrule(lr){8-13}
		Problem & Stp &Iter & CPU & Purity & $\Delta$-Spread & $\Gamma$-Spread & Stp &Iter & CPU & Purity & $\Delta$-Spread & $\Gamma$-Spread \\
		\midrule
		DGO2*   & 2.4945e-3 & 95.79 & 0.67 & 2.0370 & 1.6859 & 0.7588 & 3.6806e-2 & 6.65 & 0.06 & 1.9643 & 1.4162 & 0.7586 \\
		Toi8   & 1.5817e-3 & 146.89 & 1.01 & 2.0000 & 1.3602 & 3.2995 & 3.5427e-3 & 92.69 & 0.66 & 2.0000 & 1.3198 & 2.1295 \\
		\bottomrule
	\end{tabular}
	}
	\label{tab:numer_compare_dgo2-toi8}
\end{table}

Since \(\theta(x^{k})\) characterizes the convergence behavior of the algorithm, we plot the evolution of \(|\theta(x^{k})|\) generated by MCondG-AS and MCondG-TAS using the initial point \(x^0\) given in Examples 3.1 and 3.2, as illustrated in Figure \ref{fig:theta_curve}. To visually highlight the convergence discrepancy between the two methods, the $y$-axis is set on a logarithmic scale. As observed from the subfigures, the curve corresponding to MCondG-TAS descends far more rapidly than that of MCondG-AS across both test problems. This means that MCondG-TAS can take larger steps while maintaining the descent property.
\begin{figure}[H]
	\centering
	\includegraphics[width=0.8\linewidth]{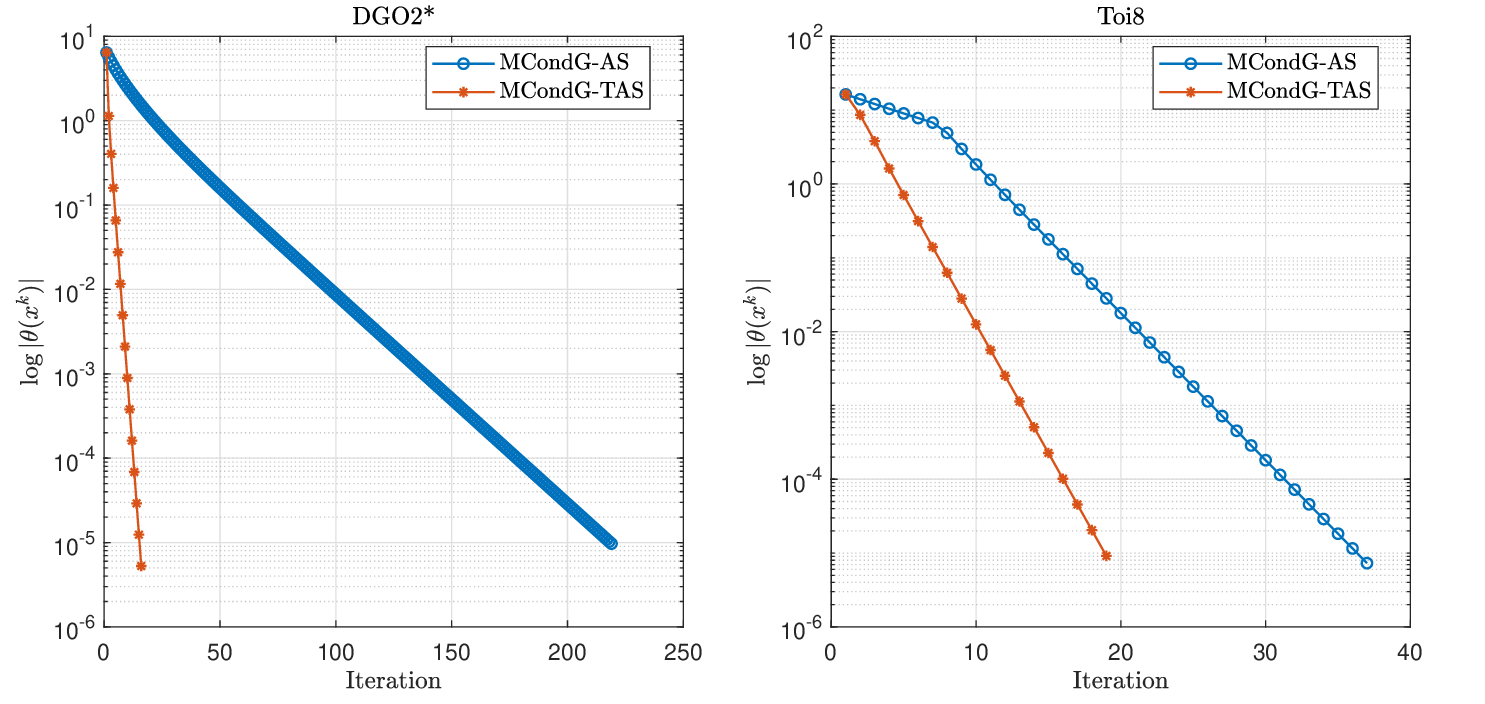} 
	\caption{Evolution of $\log|\theta(x^k)|$ generated by MCondG-AS and MCondG-TAS on DGO2* and Toi8.}
	\label{fig:theta_curve}
\end{figure}

\begin{example}\label{ex3}
	Consider the problem \eqref{mop} with
	\begin{align*}
		f_i(x) = \left(i\left(\sum_{j=1}^{n}jx_{j}\right)-1\right)^{2},\quad i\in\langle m\rangle
	\end{align*}
	and $\Omega=[-1,1]^{n}$. 
	This problem is referred to as MGH33, which can be found in \cite{mita2019nonmonotone,toint1983test}. 
\end{example}

For this example, direct calculation gives
\begin{equation*}
	L_{i}=\frac{i^{2}n(n+1)(2n+1)}{3},\quad i\in\langle m\rangle.
\end{equation*}
It is evident that the gradient Lipschitz constants associated with different objectives are distinct from one another. This property makes MGH33 an ideal benchmark for evaluating the performance of our proposed tight adaptive step size strategy. In our numerical tests, we adopt multiple combinations of parameters \(m\in\{2,3,5,10\}\) and \(n\in\{2,10,50\}\), which yield 12 specific test instances denoted as MGH33a to MGH33l. The numerical results for MCondG-AS and MCondG-TAS on these test cases are reported in Table \ref{tab:mgh33_compare}.

\begin{table}[H]
	\centering
	\setlength{\tabcolsep}{3pt} 
	\caption{Performance comparison of AS and TAS on MGH33 series problems}
	\label{tab:mgh33_comparison}
	\footnotesize
		\resizebox{\linewidth}{!}{
	\begin{tabular}{
			lcccccccccccccc
		}
		\toprule
		\multirow{2}{*}{Problem} &
		\multirow{2}{*}{$m$} &
		\multirow{2}{*}{$n$} &
		\multicolumn{6}{c}{MCondG-AS} &
		\multicolumn{6}{c}{MCondG-TAS} \\
		\cmidrule(lr){4-9} \cmidrule(lr){10-15}
		& & & {Stp} & {Iter} & {CPU} & {Purity} & {$\Delta$-Spread} & {$\Gamma$-Spread} &
		{Stp} & {Iter} & {CPU} & {Purity} & {$\Delta$-Spread} & {$\Gamma$-Spread} \\
		\midrule
		MGH33a & 2 & 2   & 1.0661e-2 & 21.91 & 0.15 & 5.4615 & 1.7377 & 0.1846 & 6.2944e-2 & 4.28 & 0.07 & 1.2241 & 1.3460 & 0.0967 \\
		MGH33b & 2 & 10  & 2.4464e-3 & 63.22 & 0.45 & 6.6667 & 1.9394 & 0.6842 & 9.4290e-3 & 14.57 & 0.12 & 1.1765 & 1.8372 & 0.5940 \\
		MGH33c & 2 & 50  & 7.0867e-4 & 105.54 & 0.73 & 5.0000 & 1.9798 & 1.0000 & 2.8979e-3 & 24.98 & 0.18 & 1.2500 & 1.9378 & 0.9790 \\
		MGH33d & 3 & 2   & 4.8116e-3 & 44.50 & 0.29 & 4.6111 & 1.7114 & 0.8811 & 7.2787e-2 & 3.45 & 0.06 & 1.2769 & 1.1901 & 0.2505 \\
		MGH33e & 3 & 10  & 1.0127e-3 & 133.39 & 0.86 & 30.0000 & 1.9798 & 4.0000 & 1.0854e-2 & 12.97 & 0.17 & 1.0345 & 1.7594 & 2.2898 \\
		MGH33f & 3 & 50  & 4.2274e-4 & 190.85 & 1.39 & 23.0000 & 1.9596 & 3.1786 & 3.5844e-3 & 21.60 & 0.15 & 1.0455 & 1.9104 & 3.5114 \\
		MGH33g & 5 & 2   & 1.6137e-3 & 109.61 & 0.71 & 3.8846 & 1.6879 & 4.0270 & 9.7424e-2 & 2.36 & 0.04 & 1.3467 & 1.1426 & 1.0559 \\
		MGH33h & 5 & 10  & 4.2415e-4 & 348.25 & 2.26 & 11.2500 & 1.9192 & 7.9438 & 1.4154e-2 & 10.31 & 0.13 & 1.0976 & 1.7520 & 6.5373 \\
		MGH33i & 5 & 50  & 1.0988e-4 & 618.38 & 4.43 & 45.0000 & 1.9798 & 16.0000 & 3.7135e-3 & 16.91 & 0.12 & 1.0227 & 1.9294 & 14.9648 \\
		MGH33j & 10 & 2   & 5.8221e-4 & 333.98 & 2.21 & 4.6500 & 1.6912 & 20.8633 & 1.1444e-1 & 2.03 & 0.04 & 1.2740 & 1.1664 & 7.2759 \\
		MGH33k & 10 & 10  & 2.0921e-4 & 644.54 & 6.05 & 23.5000 & 1.9733 & 58.6584 & 1.2701e-2 & 10.12 & 0.09 & 1.0444 & 1.8319 & 50.3431 \\
		MGH33l & 10 & 50  & 7.4283e-5 & 981.90 & 6.82 & 51.0000 & 29.3875 & 640.6558 & 3.8830e-3 & 18.39 & 0.25 & 1.0200 & 1.8854 & 69.8192 \\
		\bottomrule
	\end{tabular}
}\label{tab:mgh33_compare}
\end{table}

The average step sizes (Stp) obtained by MCondG-TAS are larger than those of MCondG-AS for all test instances. The gap between the two methods becomes larger as the number of objectives $m$ increases. For problems with substantially different gradient Lipschitz constants, the use of $L_{\max}$ in MCondG-AS leads to relatively small step sizes, whereas MCondG-TAS can select larger ones by taking the individual Lipschitz constants into account. The difference in step sizes is accompanied by a reduction in the number of iterations and CPU time. For all test instances, MCondG-TAS requires fewer iterations and less CPU time than MCondG-AS. For example, when $n=2$, the iteration count of MCondG-AS increases from $21.91$ to $333.98$ as $m$ increases from $2$ to $10$, while the corresponding iteration counts of MCondG-TAS are $4.28$, $3.45$, $2.36$, and $2.03$. The CPU time of MCondG-AS increases from $0.15$ s to $2.21$ s, whereas MCondG-TAS requires only $0.04$--$0.15$ s. The difference is also apparent for $n=50$. When $m=10$, for example, MCondG-AS requires $981.90$ iterations and $6.82$s, compared with $18.39$ iterations and $0.25$s for MCondG-TAS. The quality of the obtained solutions is also improved with MCondG-TAS. For all test instances, it gives smaller Purity, $\Delta$-Spread, and $\Gamma$-Spread values than MCondG-AS. For MGH33j, for example, the three values are $4.6500$, $1.6912$ and $20.8633$ for MCondG-AS, and $1.2740$, $1.1664$ and $7.2759$ for MCondG-TAS, respectively. For MGH33l, the corresponding values decrease from $51.0000$, $29.3875$ and $640.6558$ to $1.0200$, $1.8854$ and $69.8192$. Thus, the larger step sizes used by MCondG-TAS are not accompanied by a loss in solution quality.

The observed behavior is related to the way the Lipschitz constants are used in the two step size strategies. MCondG-AS uses the largest constant $L_{\max}$. When the objective-wise Lipschitz constants are far apart, this choice is governed by the largest one and consequently results in conservative step sizes. In MCondG-TAS, the individual constants $L_i$ are retained when constructing the candidate step sizes. The resulting step sizes are therefore less affected by $L_{\max}$, as reflected by the Stp values in Table~\ref{tab:mgh33_compare}. The difference becomes more evident as $m$ increases and is accompanied by a lower iteration count and CPU time. Although the TAS step size requires the evaluation of a larger set of candidates, the additional cost is small compared with the reduction in the number of iterations for these test problems. The MGH33 results therefore illustrate the advantage of using objective-wise Lipschitz constants when their values differ substantially across the objectives.

To investigate the convergence behavior of the proposed MCondG-TAS  in comparison with MCondG-AS on MGH33, we performed experiments on instances MGH33a--MGH33l. All algorithms were initialized from the same point, \(x^0 = (0.5, 0.5, \dots, 0.5)^{\top}\). Figure \ref{fig:mgh33theta} presents the evolution of \(\log|\theta(x^{k})|\) versus the number of iterations for both methods.  As is evident from the plots, MCondG-TAS exhibits a remarkably fast convergence, consistently driving the the stationarity measure below the tolerance threshold of $10^{-5}$ within typically fewer than 20 iterations on all 12 problems. This observation underscores the strong robustness of MCondG-TAS against increases in problem dimensionality. Conversely, MCondG-AS follows a relatively smooth log-linear trajectory of descent but requires a significantly larger number of iterations to satisfy the same stopping criterion. More critically, for instances with $m=10$, MCondG-AS fails to reach the tolerance of $10^{-5}$ 
within the upper limit of permissible iterations.

\begin{figure}[H]
	\centering
	\includegraphics[width=\linewidth]{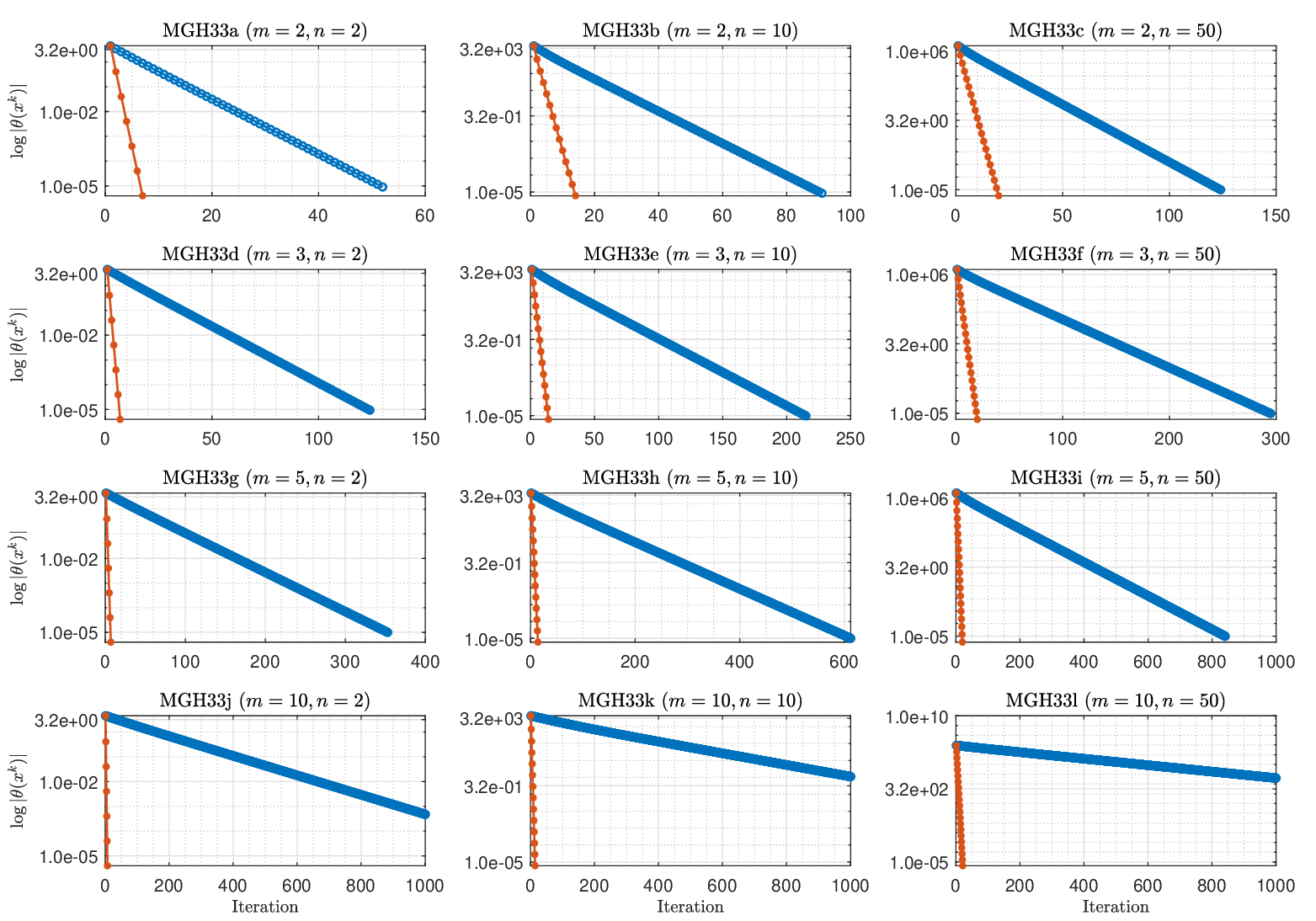}
	\caption{Convergence behavior of $|\theta(x^k)|$ generated by MCondG-AS (blue) and MCondG-TAS (orange) on MGH33 with varying dimensions $(m,n)$.}
	\label{fig:mgh33theta}
\end{figure}

In order to have a good comparison, we plot the final solutions obtained by MCondG-AS and MCondG-TAS for six test problems, namely, DGO2*, Toi8, MGH33a, MGH33d, MGH33g and MGH33j. Figure \ref{fig:pff1} presents the image sets of DGO2*, Toi8, MGH33a, and MGH33d, together with the final solutions obtained by the two algorithms. Since the number of objectives in MGH33g and MGH33j exceeds three, parallel coordinate plots are employed for visualization. Figure \ref{fig:pff2} shows the final solutions obtained by MCondG-AS and MCondG-TAS for these two problems. As can be observed from Figures \ref{fig:pff1} and \ref{fig:pff2}, MCondG-TAS generally obtains solutions that provide a better approximation of the Pareto front than those obtained by MCondG-AS.

\begin{figure}
	\centering
	\includegraphics[width=0.23\linewidth]{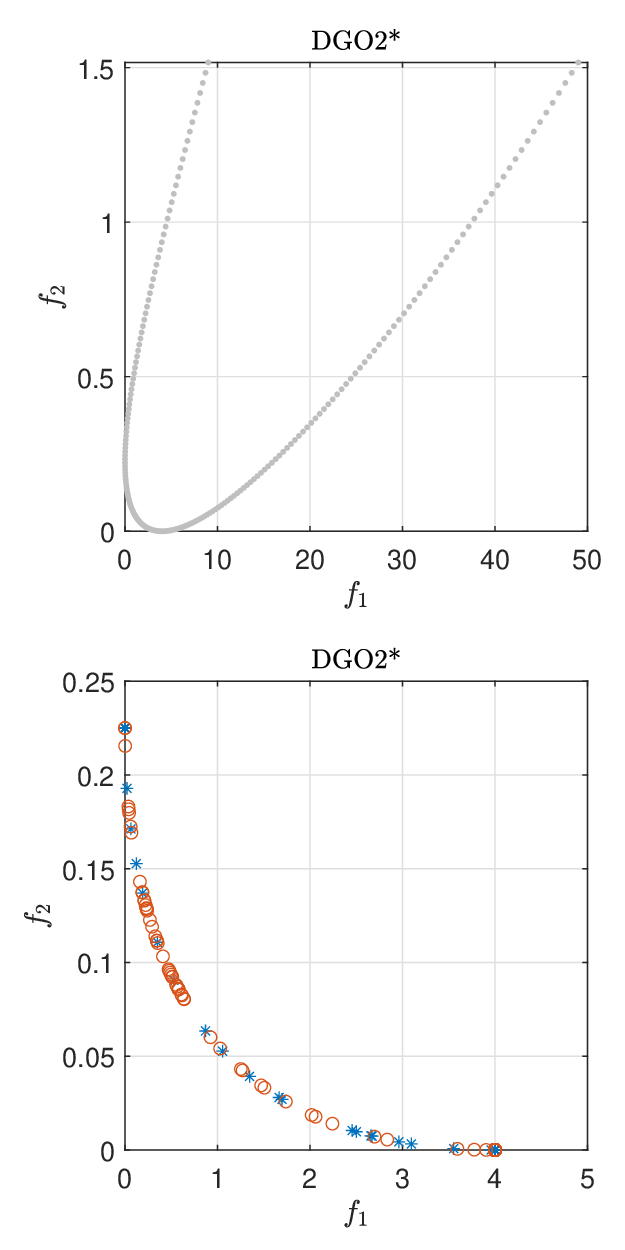}\hskip0.1in
	\includegraphics[width=0.23\linewidth]{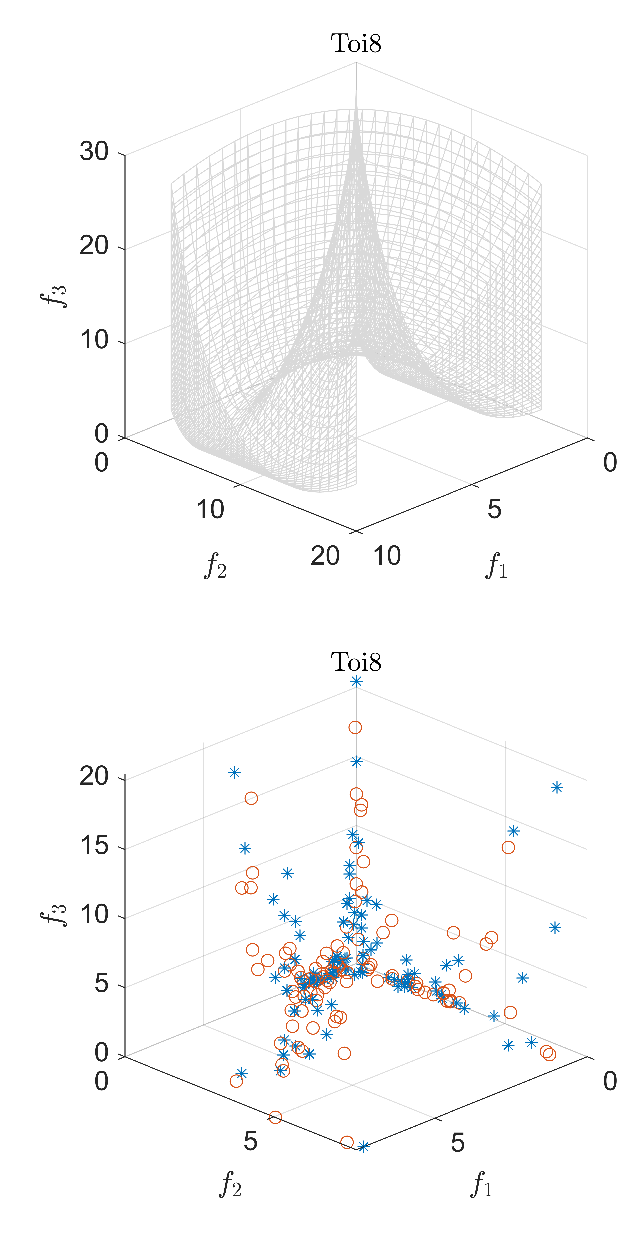}\hskip0.1in
	\includegraphics[width=0.23\linewidth]{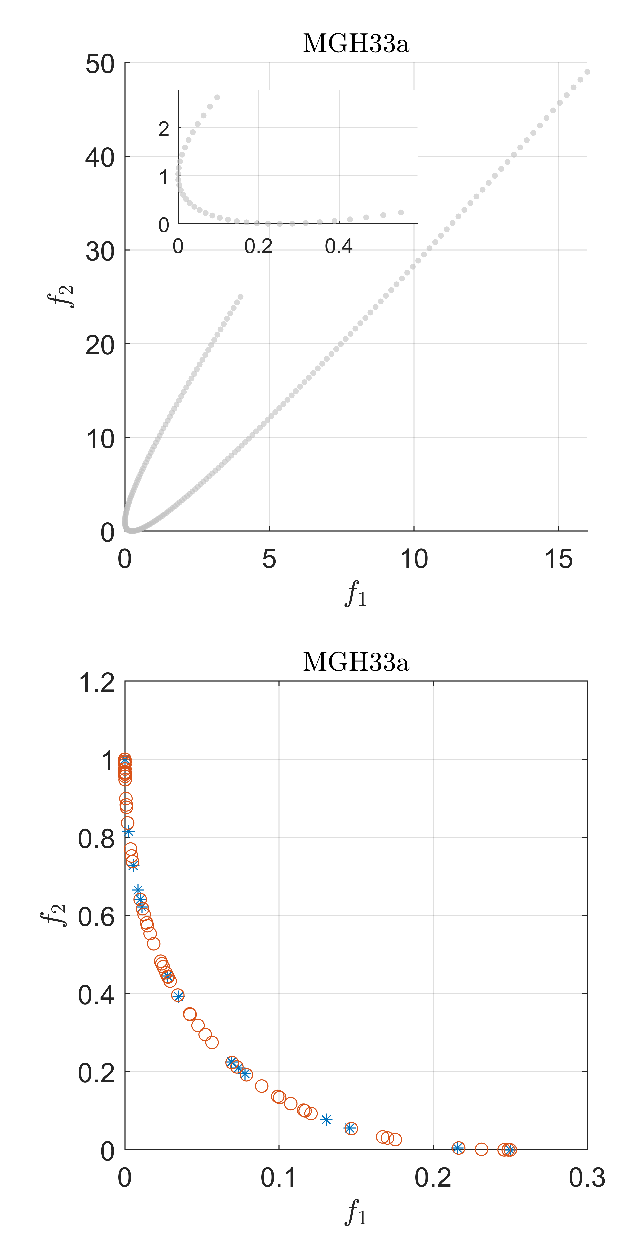}\hskip0.1in
	\includegraphics[width=0.23\linewidth]{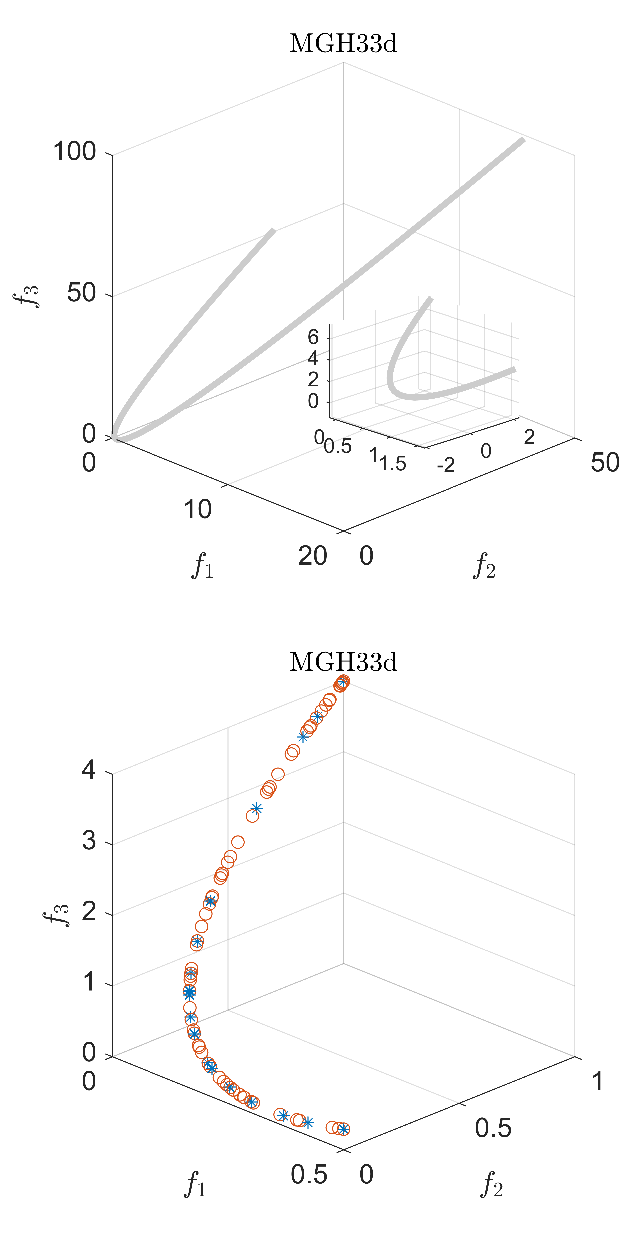}
	\caption{Image sets and final solutions of MCondG‑AS (light‑blue asterisks) and MCondG‑TAS (orange‑red circles) for \(\text{DGO2}^*\), Toi8, MGH33a and MGH33d.}
	\label{fig:pff1}
\end{figure}

\begin{figure}[H]
	\centering
	\includegraphics[width=0.6\linewidth]{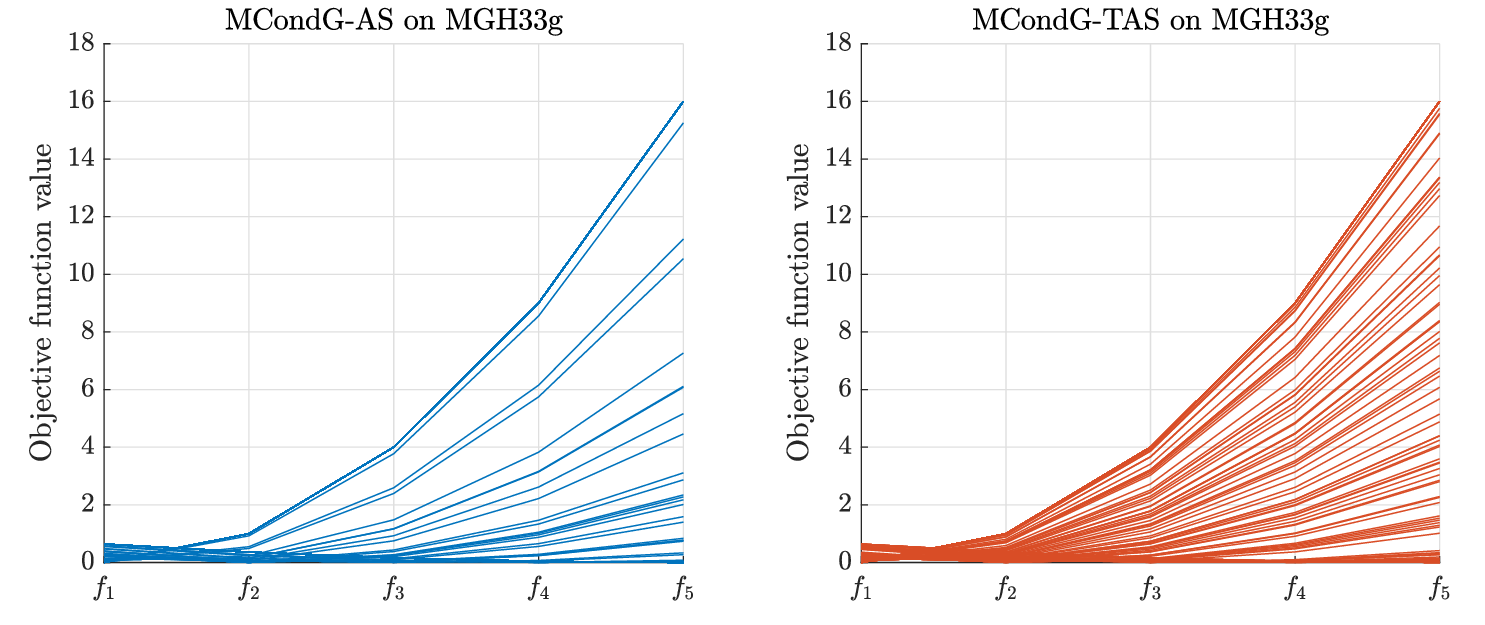}
	\includegraphics[width=0.6\linewidth]{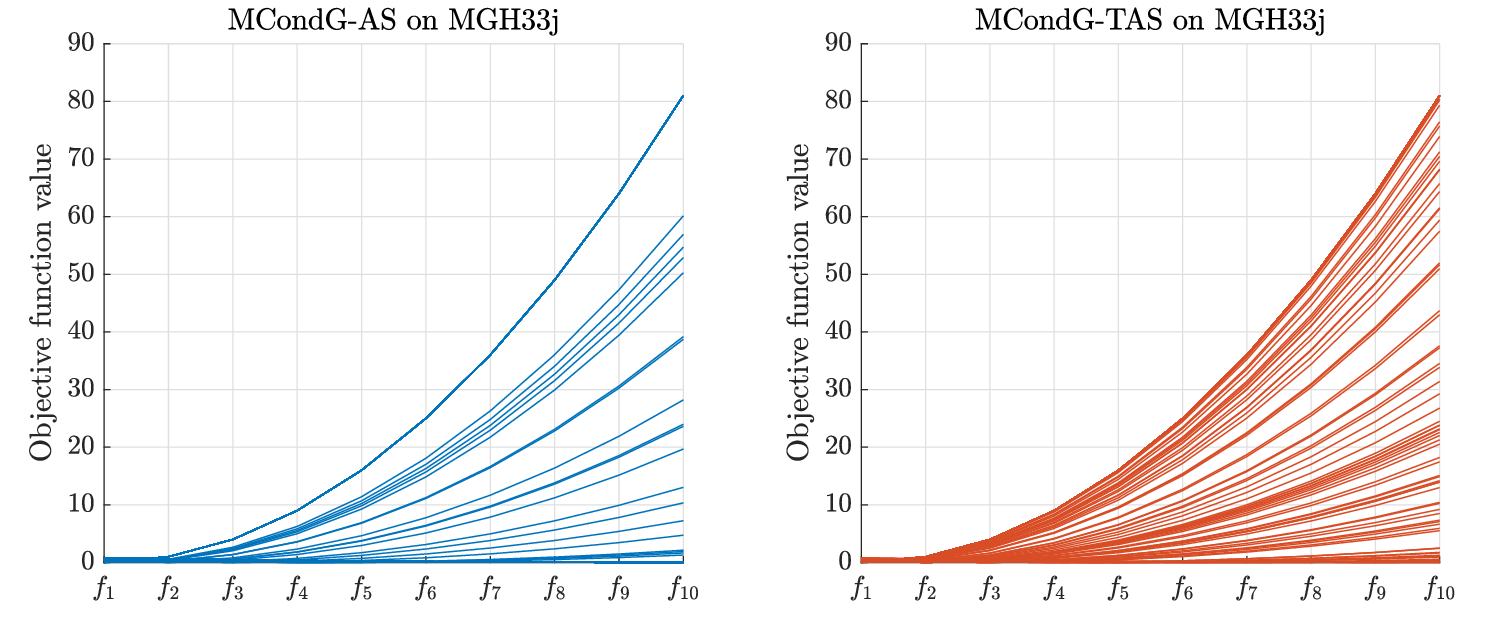}
	\caption{Parallel‑coordinate plots of the final solutions obtained by MCondG‑AS and MCondG‑TAS on MGH33g (\(m=5\)) and MGH33j (\(m=10\)).}
	\label{fig:pff2}
\end{figure}

Overall, the numerical results confirm that the proposed tight adaptive step size effectively alleviates the conservativeness of the original adaptive step size. By exploiting individual curvature information of different objective functions, MCondG-TAS can take larger steps while maintaining the descent property and solution quality, resulting in faster convergence and lower computational costs.

\subsection{Performance on real-world problem}

To further evaluate the practical performance of the proposed tight
adaptive step size, we consider a real-world multiobjective optimization problem arising from the design and optimization of a packed bed latent
heat thermal energy storage system (PBLHTS). Such thermal storage systems have potential applications in solar power generation, industrial waste heat recovery, heating, ventilation, air conditioning and other energy-related applications. 

Following Gao et al.~\cite{gao2020multi}, three performance criteria
are considered simultaneously, namely, the effective heat storage time
$t_{\rm eff}$, the effective heat storage capacity $Q_{\rm eff}$, and
the exergy efficiency $\varphi_{\rm ex}$. Accordingly, the problem can
be formulated as the following three-objective optimization problem:
\begin{equation*}
	\min_{x\in\Omega} F(x)
	=
	\left(f_1(x),f_2(x),f_3(x)\right)^{\top},
\end{equation*}
where
\begin{equation*}
	f_1(x)=t_{\rm eff},\qquad
	f_2(x)=-Q_{\rm eff},\qquad
	f_3(x)=-\varphi_{\rm ex}.
\end{equation*}
The negative signs in $f_2$ and $f_3$ convert the maximization of the
effective heat storage capacity and the exergy efficiency into
minimization objectives. The decision vector consists of nine thermophysical and geometric
design parameters of the thermal storage system, and the feasible set $\Omega$ is a box-constrained set. The complete mathematical expressions of the three objective functions,
together with the detailed definitions and physical interpretations of the decision variables and their bounds, can be found in
Gao et al. \cite{gao2020multi}.  An important feature of this problem is that the three objective
functions generally exhibit heterogeneous curvature characteristics.
Accordingly, their gradient Lipschitz constants are different, i.e.,
$L_{i}=\sup_{x\in\Omega}\|\nabla^{2}f_{i}(x)\|_{2}$ for $i=1,2,3$. 

The results in Table \ref{tab:realmop} demonstrate the effectiveness of
MCondG-TAS on the PBLHTS problem. The average step size of MCondG-TAS is
 approximately $8.73$ times that of MCondG-AS, reflecting the benefit of retaining the individual
gradient Lipschitz constants rather than relying solely on $L_{\max}$. The
larger step sizes substantially reduce the computational cost: MCondG-TAS
requires only $80.50$ iterations and $1.21$ s of CPU time, compared with
$714.70$ iterations and $8.68$ s for MCondG-AS, corresponding to reductions
of approximately $88.7\%$ and $86.1\%$, respectively. Meanwhile, MCondG-TAS
achieves better values for all three solution-quality indicators, with the
Purity, $\Delta$-Spread, and $\Gamma$-Spread decreasing by approximately
$33.4\%$, $3.6\%$, and $10.2\%$, respectively. As shown in Figure~\ref{fig:realmoppff},
the final solutions obtained by the two methods are distributed along a
similar region of the image set, indicating that the improved computational
efficiency of MCondG-TAS is not achieved at the expense of solution quality.

\begin{table}[H]\footnotesize
		\setlength{\tabcolsep}{18pt} 
	\centering
	\caption{Numerical results of MCondG-AS and MCondG-TAS on PBLHTS.}
	\label{tab:comparison}
	\begin{tabular}{ccccccc}
		\toprule
		Algorithm & Stp & Iter & CPU & Purity & $\Delta$-Spread & $\Gamma$-Spread \\
		\midrule
		MCondG-AS  & 7.4326e-3 & 714.70 & 8.68 & 2.4925 & 0.7772 & 548956 \\
	   MCondG-TAS & 6.4928e-2 & 80.5  & 1.21  & 1.6600  & 0.7496 & 492854 \\
		\bottomrule
	\end{tabular}
	\label{tab:realmop}
\end{table}

\begin{figure}[H]
	\centering
	\includegraphics[width=0.8\linewidth]{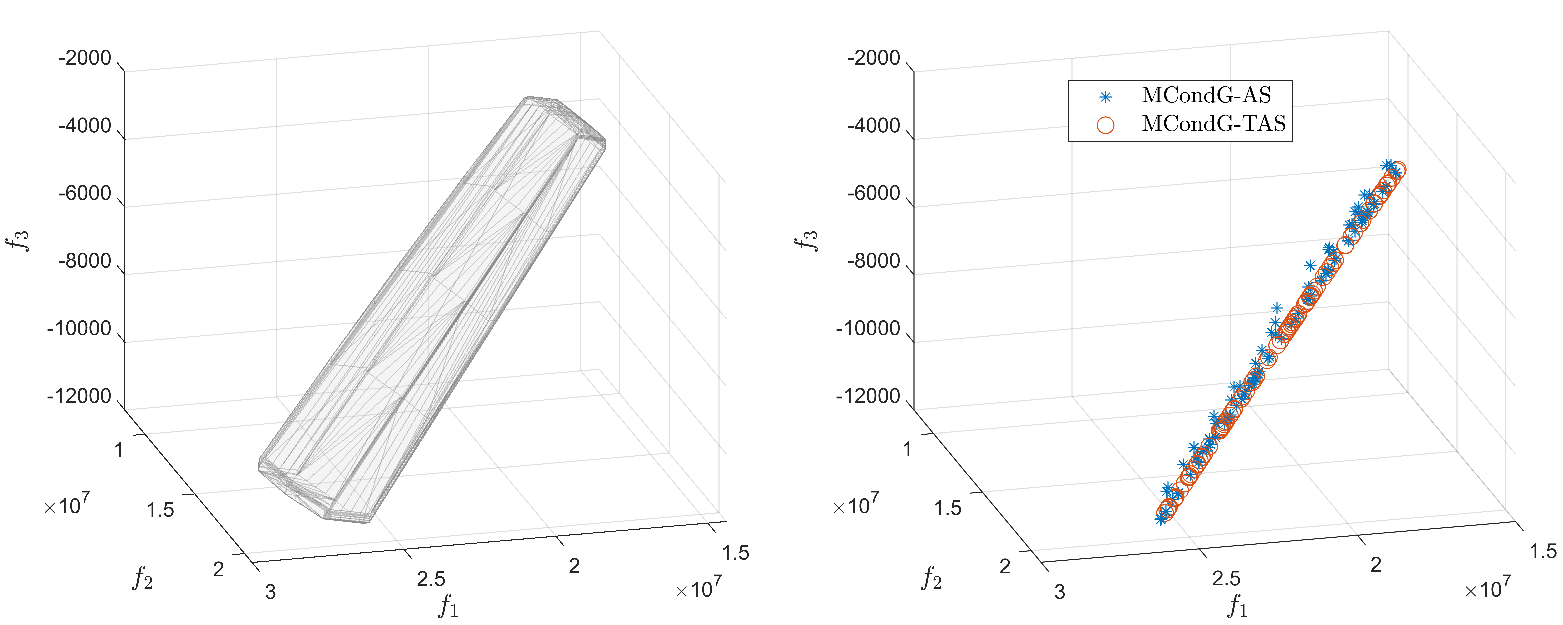}
	\caption{Image sets (left) and the final solutions (right) obtained by MCondG-AS and MCondG-TAS on PBLHTS.}
	\label{fig:realmoppff}
\end{figure}

\section{Concluding remarks and discussions}\label{conclusion}

In this paper, we proposed a tighter surrogate-based adaptive step size strategy for the multiobjective conditional gradient method to solve constrained multiobjective optimization problems with Lipschitz smooth objectives. The proposed algorithm modifies the classical adaptive step size scheme by constructing a tighter piecewise quadratic surrogate function that explicitly incorporates the individual Lipschitz constants of the objective gradients. Asymptotic global convergence to Pareto stationary points and a tighter worst-case sublinear convergence bound of the proposed MCondG-TAS algorithm were established under standard gradient Lipschitz assumptions.

Several promising directions warrant future investigation. 
Integrating the proposed tight adaptive step size with advanced conditional gradient variants, such as away-step Frank--Wolfe methods \cite{gonccalves2024away} and the method in \cite{gonccalves2025improved}, is expected to achieve further acceleration. 
The current method assumes that the gradient Lipschitz constants are available. Since the Lipschitz constants \(L_i\) are generally unknown or expensive to compute, developing a dynamic estimation strategy for each $L_i$ within the algorithmic framework is a practically relevant direction.

\section*{References}


\end{document}